\documentclass[12pt]{article}

\usepackage[text={17cm,24cm}]{geometry}

% Packages and macros go here
%\usepackage{lipsum}
\usepackage{bm}
\usepackage{amsfonts}
\usepackage{graphicx}
\usepackage{epstopdf}
\usepackage{algorithmic}
\usepackage{mathtools}

\usepackage{tikz}

\ifpdf
  \DeclareGraphicsExtensions{.eps,.pdf,.png,.jpg}
\else
  \DeclareGraphicsExtensions{.eps}
\fi

\usepackage{amsmath,amssymb,amsthm,amscd,amsfonts}

\usepackage{euscript}
\usepackage{graphicx}

\usepackage[colorlinks, urlcolor=blue, pdfborder={0 0 0 [0 0]}]{hyperref}

\hyphenation{Struc-tu-red}
\hyphenation{Ran-do-mized}
\hyphenation{Ma-xi-mi-za-tion}

\DeclareMathOperator{\tr}{tr}
\DeclareMathOperator{\diag}{diag}

\def\Var{\mathsf{D}}

\def\rank{\mathop{\mathrm{rank}}}

\def\diag{\mathop{\mathrm{diag}}}

\DeclareSymbolFont{bbold}{U}{bbold}{m}{n}
\DeclareSymbolFontAlphabet{\mathbbold}{bbold}

\newcommand\eqdef{\stackrel{\mathclap{\textrm{\mbox{def}}}}{=}}

\usepackage[english]{babel}
\newtheorem{remark}{Remark}
\newtheorem{corollary}{Corollary}
\newtheorem{proposition}{Proposition}

\newtheorem{theorem}{Theorem}
\newtheorem{lemma}{Lemma}

\bibliographystyle{plain}
%\bibliographystyle{elsarticle-num}

%new calligraphic font for subspaces
\usepackage{euscript}

\newcommand{\cH}{\EuScript{H}}

\newcommand{\cM}{\EuScript{M}}

%font for text indices like transposition X^\mathrm{T}

%tt font for time series

%bf font for matrices
\newcommand{\bfA}{\mathbf{A}}
\newcommand{\bfB}{\mathbf{B}}
\newcommand{\bfC}{\mathbf{C}}

\newcommand{\bfI}{\mathbf{I}}

\newcommand{\bfL}{\mathbf{L}}

\newcommand{\bfR}{\mathbf{R}}

\newcommand{\bfU}{\mathbf{U}}

\newcommand{\bfW}{\mathbf{W}}
\newcommand{\bfX}{\mathbf{X}}
\newcommand{\bfY}{\mathbf{Y}}

%bb font for standard spaces and expectation

%got font for any case

%old calligraphic font

\newcommand{\calT}{\mathcal{T}}

\newcommand{\spC}{\mathbb{C}}

\newcommand{\spR}{\mathbb{R}}

%tt font for time series

\newcommand{\tsC}{\mathsf{C}}
\newcommand{\tsD}{\mathsf{D}}

\newcommand{\tsP}{\mathsf{P}}
\newcommand{\tsQ}{\mathsf{Q}}

\newcommand{\tsU}{\mathsf{U}}
\newcommand{\tsV}{\mathsf{V}}

%vectors
\newcommand{\bfa}{\mathbf{a}}
\newcommand{\bfb}{\mathbf{b}}
\newcommand{\bfc}{\mathbf{c}}

\newcommand{\bfs}{\mathbf{s}}

\newcommand{\bfu}{\mathbf{u}}

\newcommand{\bfx}{\mathbf{x}}
\newcommand{\bfy}{\mathbf{y}}
\newcommand{\bfz}{\mathbf{z}}

\newcommand{\tsa}{\mathsf{a}}
\newcommand{\tsb}{\mathsf{b}}
\newcommand{\tsc}{\mathsf{c}}

\newcommand{\tsp}{\mathsf{p}}
\newcommand{\tsq}{\mathsf{q}}
\newcommand{\tsr}{\mathsf{r}}

\newcommand{\tsu}{\mathsf{u}}
\newcommand{\tsv}{\mathsf{v}}
\newcommand{\tsw}{\mathsf{w}}
\newcommand{\tsx}{\mathsf{x}}
\newcommand{\tsy}{\mathsf{y}}
\newcommand{\tsz}{\mathsf{z}}
\renewcommand{\bfL}{\bfA}
\renewcommand{\bfR}{\bfB}

\newcommand{\bt}{\begin{theorem}}
\newcommand{\et}{\end{theorem}}
\newcommand{\bl}{\begin{lemma}}
\newcommand{\el}{\end{lemma}}
\newcommand{\bp}{\begin{proposition}}
\newcommand{\ep}{\end{proposition}}
\newcommand{\bc}{\begin{corollary}}
\newcommand{\ec}{\end{corollary}}

\newcommand{\bd}{\begin{definition}\rm}
\newcommand{\ed}{\end{definition}}
\newcommand{\bex}{\begin{example}\rm}
\newcommand{\eex}{\end{example}}
\newcommand{\br}{\begin{remark}\rm}
\newcommand{\er}{\end{remark}}

\newcommand{\btbh}{\begin{table}[!ht]}
\newcommand{\etb}{\end{table}}
\newcommand{\bfgh}{\begin{figure}[!ht]}
\newcommand{\efg}{\end{figure}}

\newcommand{\bea}{\begin{eqnarray*}}
\newcommand{\eea}{\end{eqnarray*}}
\newcommand{\be}{\begin{eqnarray}}
\newcommand{\ee}{\end{eqnarray}}
%

%

 %is not used?
\newcommand\Expect{\mathsf{E}}

\def\rank{\mathop{\mathrm{rank}}}

\def\tr{\mathop{\mathrm{tr}}}

\makeatletter
\def\adots{\mathinner{\mkern2mu\raise\p@\hbox{.}
\mkern2mu\raise4\p@\hbox{.}\mkern1mu
\raise7\p@\vbox{\kern7\p@\hbox{.}}\mkern1mu}}
\newcommand{\l@abcd}[2]{\hbox to\textwidth{#1\dotfill #2}}
\makeatother

\begin{document}

\author{Nina Golyandina\footnote{St.Petersburg State University, Universitetskaya nab. 7/9, St.Petersburg, 199034, Russia; n.golyandina@spbu.ru; the work is supported by RFBR, project number 20-01-00067},
Anatoly Zhigljavsky\footnote{Cardiff University, School of Mathematics,  Cardiff CF24 4AG, UK; zhigljavskyaa@cardiff.ac.uk}}
\title{Blind deconvolution of covariance matrix inverses for autoregressive processes}
\date{}
\maketitle

\begin{abstract}
Matrix $\bfC$ can be blindly deconvoluted if there exist matrices $\bfA$ and $\bfB$ such that $\bfC= \bfA \ast \bfB$, where
$\ast$ denotes the operation of matrix convolution. We study the problem of matrix deconvolution in the case where
 matrix $\bfC$ is proportional to the inverse of the autocovariance matrix of an autoregressive process.
 We show that the deconvolution of such matrices is important in problems of Hankel structured low-rank approximation (HSLRA).
In the cases of  autoregressive models of orders one and two, we fully characterize the range of parameters  where such deconvolution  can be performed
and provide construction schemes for performing deconvolutions. We also consider general autoregressive models of order $p$, where we prove that the
deconvolution $\bfC= \bfA \ast \bfB$ does not exist if the matrix $\bfB$ is diagonal and its size is larger than $p$.
\end{abstract}

{Keywords: matrix convolution; structured low-rank approximation; autoregressive process; correlated noise}

MSC 2000 classification:	15A24, 15A21, 62M10

\section{Introduction}

Let $A$ and $B$  be positive integers  and
$\bfa = (a_0, \ldots, a_{A})^\top \in \spR^{A+1}$ and $\bfb = (b_0, \ldots, b_{B})^\top\in \spR^{B+1}$ be two vectors.
The convolution of vectors $\bfa$ and $\bfb$ is the  vector $\bfc = \bfa \ast \bfb=(c_0, \ldots, c_{C})^\top \in \spR^{C+1} $
with $C = A + B $ and  $c_i = \sum_k a_k b_{i-k},$ where $i=0,1, \ldots, C\, $ and the sum taken over the set of indices $k$  such that the elements $a_{k}$ and $b_{i-k}$ are defined (that is, $ \max\{0,i-B\} \leq k \leq  \min\{A,i\}$).

The definition of vector convolution naturally extends to matrices as follows.
Let $\bfA=(a_{i,j})_{i,j=0}^A$ and $\bfB=(b_{i, j})_{i,j=0}^B$  be matrices of sizes $(A\!+\!1)\!\times\!(A\!+\!1)$ and
$(B\!+\!1)\!\times\!(B\!+\!1)$, respectively.
A matrix $\bfC = (c_{i, j})_{i,j=0}^{A+B}$  is a convolution
of the matrices $\bfA$
and $\bfB$ if
$ c_{i,j}= \sum_{k,l} a_{k,l}b_{i-k,j-l}$, where the sum is taken over the sets of indices $k$ and $l$ such that the elements $a_{k,l}$ and $b_{i-k,j-l}$ are defined; that is, $ \max\{0,i-B\} \leq k \leq \min\{A,i\}$ and $\max\{0,j-B\} \leq l \leq  \min\{A,j\}$.

The generating function (gf) of a vector $\bfu=(u_0,\ldots,u_M)^\top$ is defined as
$G_\bfu(t) = u_0 +u_1 t + \ldots + u_M t^M\,$. Similarly, the generating function of a matrix $\bfU=(u_{i,j})_{i,j=0}^M$
is $G_\bfU(t,s)= \sum_{i,j=0}^M u_{i,j} t^i s^j  $, where $t,s \in \spC$.

From the definition of  convolution, $\bfc = \bfa \ast \bfb$ if and only if $G_\bfc(t)= G_\bfa(t) G_\bfb(t)$ for all $t \in \spC$.
Similarly,
$\bfC= \bfA \ast \bfB$ if and only if $G_\bfC(t,s)= G_\bfA(t,s) G_\bfB(t,s)$ for all $t,s \in \spC$. This yields that all statements about vector and matrix convolutions and deconvolutions can be equivalently formulated in the language of generating functions.

A vector $\bfc$ can be blindly deconvoluted if there exist vectors $\bfa$ and $\bfb$ such that $\bfc = \bfa \ast \bfb$.
This problem (interesting only under some restrictions on $\bfa$ and $\bfb$)  is equivalent to studying the roots of the gf $G_\bfc(t)$.

A matrix $\bfC $ can be blindly deconvoluted if there exist matrices $\bfA$ and $\bfB$ such that $\bfC= \bfA \ast \bfB$.

There exists an extensive literature related to matrix convolution and (blind) deconvolution but it is mostly related to applications in image processing; see e.g. \cite{Kundur.Hatzinakos1996, Michailovich.Tannenbaum2007}.
We are interested in the blind deconvolution of matrices $\bfC$ which are proportional to inverses of the autocovariance matrices of autoregressive processes (ARs). As argued in the next section, this problem has significant practical importance in  signal processing and time series analysis.

The structure of the paper is as follows. In Section 2 we discuss the practical importance of  the stated problem and establish that  two different types of norms in the HSLRA problem are equivalent
if and only if the matrix, inverse to the autocovariance matrix of the noise process, can be blindly deconvoluted.
In Section 3 we prove  auxiliary statements about generating functions (gf) of diagonals of matrices and relate gf of banded matrices to gf of its diagonals. In Section 4 we provide the main matrices of interest (inverses to autocovariance matrices for first-order AR(1) and second-order AR(2) autoregressive processes) and derive important relations between gf of diagonals of these matrices. In Section 5 we prove our main results establishing ranges of parameters in the AR(1) and AR(2) models, where the deconvolution can be performed.
In particular, we  establish that  for the stationary AR(1) and AR(2) models the
inverse to the autocovariance matrix cannot be deconvoluted. In cases where the deconvolution is possible, we  provide the construction schemes for the matrices $\bfA$ and $\bfB$. In Section 6 we specialize  results of Section 5 when we require an additional condition of non-negative definiteness for  the matrices
$\bfA$ and $\bfB$.
In Section 7 we derive some partial results for the  stationary AR($p$) model with general $p\geq 1$. Section 8 concludes the paper.

\section{Motivation: selection of a matrix norm in HSLRA}

Our  {motivation} is  the problem of extraction of a signal $\bfs=(s_0,s_1, \ldots, s_{N-1})^\top$
from an observed noisy signal $\bfx=(x_0,x_1, \ldots, x_{N-1})^\top=\bfs+\bm\xi$ of length $N$,
where $\bm\xi=(\xi_0,\xi_1, \ldots, \xi_{N-1})^\top$ is a vector of (unobserved) random noise
with zero mean and covariance matrix $\bm\Sigma = \Expect \bm\xi \bm\xi^\top$.

We consider a wide class of signals, which have an explicit parametric form of a finite sum:
$$
  s_n = \sum_k \tsp_k(n)\exp(\alpha_k n) \sin(2\pi \omega_k n + \phi_k),
$$
where $\alpha_k, \omega_k$ and $\phi_k$ are arbitrary real numbers and $\tsp_k(n)$ are polynomials in $n$.

This class of signals can be defined through  low-rank Hankel matrices as follows.
Set a window length $L$, $1<L\leq N/2$; $K = N - L +1$.
With a series $\bfz=(z_0,z_1, \ldots, z_{N-1})^\top$, we associate  the so-called trajectory matrix
\begin{eqnarray*}
  \calT_L(\bfz) = \left(
  \begin{array}{llll}
    z_0      & z_1 & \dots     & z_{K-1}     \\
    z_1      & \cdots  & \;\cdots    & z_{K}    \\
    \; \ldots   & \;\ldots   & \ \:\ldots    & \; \ldots   \\
    z_{L-1}  & z_{L} & \dots & z_{N-1}
  \end{array}
  \right) \in \spR^{L \times K}.
\end{eqnarray*}
If there exists an integer $r<\lfloor N/2 \rfloor $ such that $r$ is the rank of $\calT_L(\bfz)$ for any $L\in [r,  \lfloor N/2 \rfloor ]$, then we  say that  $\bfz$ is a \emph{series of rank $r$} {and write $\rank \bfz = r$}.

The problem of finite-rank signal extraction can be reduced to the problem of approximation of the $L$-trajectory matrix $\calT_L(\bfx)$ of
the observed time series $\bfx$ by a Hankel matrix of rank $r$. This problem belongs to
the class of problems of Hankel structured low-rank approximation (HSLRA), see e.g. \cite{Chu.etal2003,Markovsky2019,Markovsky.etal2006}.

The HSLRA problem can be stated in two forms: (a) vector form and (b) matrix form.
The vector (time series) form of this problem is: for given $\bfx \in \spR^{N}$ and positive integer $r<\lfloor N/2 \rfloor $,
\begin{equation}\label{eq:L-rank_task}
\|\bfx-\bfy\|^2_{\bfW} \to \min_{\bfy: \rank \bfy \le r}\,,
\end{equation}
where $\bfy = (y_0, y_1, \ldots, y_{N-1})^\top$, $\|\bfz\|_{\bfW}^2 = \bfz^\top \bfW \bfz$ for $\bfz \in \spR^{N}$ and $\bfW$ is some positive definite matrix of size $N\times N$.

The solution of \eqref{eq:L-rank_task} can be considered as a weighted least-square estimate (WLSE) of the signal $\bfs$.
If noise $\bm\xi$ is Gaussian with covariance matrix $\bm\Sigma$, then the WLSE with $\bfW_0 = \bm\Sigma^{-1}$ is the maximum likelihood estimate (MLE).
If the properties of the noise process are known then the vector form \eqref{eq:L-rank_task}
is the most natural way of defining the HSLRA problem. However, solving the HSLRA problem in the vector form is extremely difficult, see e.g. \cite{gillard2013optimization}.
{Although the vector form allows fast implementations, these implementations are very complex
and need a starting point that is close to the solution \cite{Markovsky.Usevich2014,Zvonarev.Golyandina2018}.}

The matrix form of the HSLRA problem  allows one to use simple subspace-based alternating projection methods (e.g., the Cadzow iterations \cite{Cadzow1988})
and hence is computationally much   preferable
than the vector form \eqref{eq:L-rank_task};  see \cite[Sect. 3.4]{golyandina2018singular} for details.
Note also that the well-known method called singular spectrum analysis (SSA) can formally be considered as one Cadzow  iteration
and therefore  it is also a subspace-based method and thereby related to the matrix form of the HSLRA; see \cite{Golyandina} for a modern introduction to the methodology of SSA and  \cite{golyandina2018singular} for a comprehensive overview of SSA.

Define the inner product in $\spR^{L \times K}$ as $\langle \bfX, \bfY \rangle = \langle \bfX, \bfY \rangle_{\bfL, \bfR} = \tr (\bfL \bfX \bfR \bfY^\top)$, where $\bfL =(a_{i,j})_{i,j=0}^{L-1} \in \spR^{L \times L}$, $\bfR=(b_{i,j})_{i,j=0}^{K-1}  \in \spR^{K \times K}$; $\| \bfX \|_{\bfL, \bfR}$ is the corresponding matrix norm in $ \spR^{L \times K}$.
The  HSLRA  problem in the matrix form is the following optimization problem:
\begin{equation}\label{eq:rank_task}
  \|\bfX - \bfY \|_{\bfL, \bfR}^2 \to \min_{\bfY\in \cM_r \cap \cH}\;, \;\;
\end{equation}
where
$\cH \subset \spR^{L\times K}$ is the space of Hankel matrices of size $L \times K$, $\cM_r\subset \spR^{L\times K}$ is the set of matrices of rank not larger than $r$.
For reformulating  the original HSLRA problem  \eqref{eq:L-rank_task}  in the matrix form \eqref{eq:rank_task}, we have to choose   $\bfX = \calT_L(\bfx)$ and $\bfY = \calT_L(\bfy)$; the remaining issue then is to match the vector norm in \eqref{eq:L-rank_task}  with the  matrix norm in \eqref{eq:rank_task}.

Particular cases of the correspondence between the vector-norm and matrix-norm formulations \eqref{eq:L-rank_task} and \eqref{eq:rank_task} of the HSLRA problem are considered in \cite{gillard2015stochastic,Zvonarev.Golyandina2017}. The general case
is established in the following theorem.

\begin{theorem}\label{th:W_L_R}
For any  $\bfz \in \spR^{N}$,
$\|\calT_L(\bfz)\|_{\bfL, \bfR} = \|\bfz\|_\bfW$  if and only if
\be
\label{eq:b_decomp}
\bfW = \bfL \ast \bfR \, .
\ee
\end{theorem}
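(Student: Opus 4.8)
The plan is to compute the matrix norm $\|\calT_L(\bfz)\|_{\bfL,\bfR}^2 = \tr(\bfL\,\calT_L(\bfz)\,\bfR\,\calT_L(\bfz)^\top)$ explicitly as a quadratic form in the entries of $\bfz$, and then to recognize the associated matrix as $\bfL\ast\bfR$. Writing $(\calT_L(\bfz))_{a,c}=z_{a+c}$ for the Hankel trajectory matrix and expanding the trace entrywise, I would first obtain
$$
\|\calT_L(\bfz)\|_{\bfL,\bfR}^2=\sum_{a,b,c,d}\bfL_{a,b}\,\bfR_{c,d}\,z_{b+c}\,z_{a+d},
$$
where $a,b\in\{0,\ldots,L-1\}$ and $c,d\in\{0,\ldots,K-1\}$. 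Grouping the terms by the values $m=b+c$ and $n=a+d$ turns this into $\sum_{m,n}(\widetilde\bfW)_{m,n}z_m z_n$ with $(\widetilde\bfW)_{m,n}=\sum_{a,b}\bfL_{a,b}\,\bfR_{m-b,\,n-a}$, the inner sum being restricted to those indices for which all four entries are defined.

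The heart of the argument is the identification $\widetilde\bfW=\bfL\ast\bfR$, for which the generating-function calculus recalled above is the cleanest tool. Forming the bivariate gf of $\widetilde\bfW$ and factoring the resulting double sum, I expect
$$
G_{\widetilde\bfW}(t,s)=\Big(\sum_{a,b}\bfL_{a,b}\,s^a t^b\Big)\Big(\sum_{c,d}\bfR_{c,d}\,t^c s^d\Big)=G_\bfL(s,t)\,G_\bfR(t,s).
$$
Because $\bfL$ defines an inner product on $\spR^{L\times K}$ it is symmetric, so $G_\bfL(s,t)=G_\bfL(t,s)$ and hence $G_{\widetilde\bfW}(t,s)=G_\bfL(t,s)G_\bfR(t,s)$; by the generating-function characterization of convolution this gives $\widetilde\bfW=\bfL\ast\bfR$. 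Equivalently, one can substitute $k=b$, $l=a$ in the formula for $(\widetilde\bfW)_{m,n}$ and use $\bfL_{a,b}=\bfL_{b,a}$ to land exactly on $(\bfL\ast\bfR)_{m,n}=\sum_{k,l}\bfL_{k,l}\bfR_{m-k,n-l}$.

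Once the identity $\|\calT_L(\bfz)\|_{\bfL,\bfR}^2=\bfz^\top(\bfL\ast\bfR)\bfz$ is established, the theorem is immediate: the required equality of norms for every $\bfz$ is the same as $\bfz^\top(\bfL\ast\bfR)\bfz=\bfz^\top\bfW\bfz$ for all $\bfz\in\spR^N$. The ``if'' part is then trivial, and the ``only if'' part follows since a symmetric matrix is determined by its associated quadratic form (by polarization) and both $\bfW$ and $\bfL\ast\bfR$ are symmetric. I anticipate that the only real difficulty is the index bookkeeping: making the ``crossed'' pattern $\bfR_{m-b,\,n-a}$ coincide with the standard convolution pattern $\bfR_{m-k,\,n-l}$ — precisely the point where symmetry of $\bfL$ enters — and checking that the restricted summation ranges agree with those in the definition of $\bfL\ast\bfR$ (here $\bfL$ and $\bfR$ play the roles of $\bfA$ and $\bfB$ of sizes $L\times L$ and $K\times K$, so that $\bfL\ast\bfR$ has size $N\times N$).
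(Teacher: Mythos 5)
Your proposal is correct and follows essentially the same route as the paper: the paper's proof likewise expands $\tr\left(\bfL\,\calT_L(\bfz)\,\bfR\,\calT_L(\bfz)^\top\right)$ entrywise, substitutes $x_{l,k}=z_{l+k}$, and regroups by $n=l+k$, $n'=l'+k'$ to identify the coefficient matrix of the quadratic form as $\bfL \ast \bfR$. Your generating-function identification of $\widetilde{\bfW}$ and the explicit polarization argument for the ``only if'' direction merely spell out steps the paper leaves implicit, with symmetry of $\bfL$ entering at exactly the same index-crossing point in both arguments.
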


\begin{proof}
Consider the squared norm $\|\mathbf{X}\|_{\bfL, \bfR}^2={\tr (\mathbf{A} \mathbf{X}\mathbf{B}\mathbf{X}^\top})$ with $\mathbf{X}= \calT_L(\bfz)$ so that $x_{l,k}=z_{l+k}$ for $l=0, \ldots, L-1$ and $k=0, \ldots,K-1$.
We have
\begin{eqnarray*}
  \|\mathbf{X}\|_{\mathbf{A,B}}^2=
  \sum_{l,l',k,k'}  a_{l,l'} x_{l',k'} b_{k',k} x_{l,k}
  =\sum_{l,l',k,k'}    a_{l,l'} z_{l'+k'} b_{k',k} z_{l+k}
  = \sum_{l,l',k,k'}   z_{ n'} a_{l,l'}  b_{ n'-l',{n-l}} z_{n}
\end{eqnarray*}
where $n=k+l$, $n'=k'+l'$ and all sums above are taken for $l,l'=0, \ldots, L-1$ and $k,k'=0, \ldots,K-1$. By changing the summation indices in the last sum  $k \to n$ and $k' \to n'$ we obtain the required.
\end{proof}

In a typical application, when the structure of the noise in the model `signal plus noise' is assumed, the HSLRA problem is formulated in a vector form with a given matrix $\bfW$.
As mentioned above, algorithms of solving the HSLRA problem are much easier if we have the matrix rather than vector form of  the HSLRA problem.
Therefore,  in view of Theorem~\ref{th:W_L_R}, for a given $\bfW$ we would want to find positive definite matrices $\bfA$ and $\bfB$ such that
\eqref{eq:b_decomp} holds; that is, we would want to perform  a blind deconvolution of the matrix $\bfW$.

Matrices $\bfA$ and $\bfB$ in \eqref{eq:rank_task} and therefore in \eqref{eq:b_decomp}  should be symmetric non-negative definite, see e.g. \cite{Allen2014} and \cite[p.62]{golyandina2018singular}. In Theorems 2--4 below we shall require symmetry of matrices  $\bfA$ and $\bfB$ in \eqref{eq:b_decomp} and in Section~\ref{sec:ABnonnegative} we discuss whether they can be chosen to be non-negative definite.

It follows from the results of \cite{Zhigljavsky.etal2016} that in the case when the noise $\bm\xi$ is white, and therefore $\bfW = \bfI_N$, the matrix $\bfW$ cannot be blindly deconvoluted under the condition that $\bfA$ and $\bfB$ are positive definite matrices; however, for a wide range of parameters $N$ and $L$ there are many pairs of non-negative definite diagonal matrices $\bfA$ and $\bfB$ such that \eqref{eq:b_decomp} holds.
This paper extends  results of \cite{Zhigljavsky.etal2016} to the  case of banded matrices corresponding to the case where the noise~$\bm\xi$ forms an autoregressive process.

The  white-noise model is  the simplest and hence the most popular model of noise  used for formulation of the `signal plus noise' problems.
The autoregressive model of noise  is the second most common noise model used in such problems.
In particular, in climatology,  the most common  model of noise is the so-called `red noise'; that is, an auto-regressive process of order one with a positive coefficient.
Red noise  suits SSA and related methods very well, since the spectral density of red noise is monotonic.
This was the principal  reason for the creation in \cite{Allen.Smith1996} of the method called  `Monte Carlo SSA'. This method, where the assumption of red noise is crucial,  has been used and further developed in many papers including \cite{Allen.Robertson1996}, \cite{Palus.Novotna1998}, \cite{PALUS.Novotna2004} and \cite{Jemwa.Aldrich2006}. Monte Carlo SSA serves for detection of signals in red noise and is currently used for analysing time series in different areas, most notably climatology and geophysics; for example, for  investigation of ice conditions \cite{Jevrejeva.Moore2001}, sea surface temperature dynamics \cite{Allen.Smith1996}, and GPS observations \cite{Xu.Yue2015}.

In Monte Carlo SSA, the ordinary singular-value decomposition (SVD) is used as the first step for obtaining the basis of the signal subspace.
The use of the oblique SVD with AR-generated weights in Monte Carlo SSA could extend the applicability of the method.
Therefore, in addition to  understanding of the equivalence between the vector and matrix forms of the HSLRA, a theoretical investigation of  the HSLRA, SSA  and other subspace-based methods with the  autoregressive noise  seems to be   important too.

\section{Generating functions and convolution of banded matrices}

\subsection{Generating function of a matrix via generating functions of its diagonals}

In some cases (e.g, for banded matrices), it is natural to construct  generating functions of matrices as
a sum of generating functions of diagonals.
Consider a matrix  $\bfU=\{u_{i,j}\}_{i,j=0}^M$ and let $ G_\bfU(t,s)$ be its generating function (gf).

Define $\diag_i({\bfU})$, the $i$-th diagonal of  $\bfU$, as the vector of length $M-|i|+1$ with elements with indices $(j,k)$ satisfying $k-j=i$, $i=-M,\ldots,M$.
This $i$-th diagonal $\diag_i({\bfU})$ has the univariate gf
$\tsu_i (\tau)= G_{\diag_i({\bfU})}(\tau) = \sum_{j=0}^{M-|i|} u_{j,j+i} \tau^j$.

\begin{lemma}\label{lem:gf2}
\bea
  G_\bfU(t,s) = \sum_{i=-M}^{M} t^{(|i|-i)/2} s^{(|i|+i)/2} \tsu_{i} (ts).\\
\eea
If $\bfU$ is symmetric,  this formula simplifies to
$
  G_\bfU(t,s) = \tsu_{0} (ts) + \sum_{i=1}^{M} (t^i + s^i) \tsu_{i} (ts).
$
\end{lemma}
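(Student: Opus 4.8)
The plan is to start from the defining double sum $G_\bfU(t,s)=\sum_{p,q=0}^M u_{p,q}\,t^p s^q$ and regroup the monomials according to which diagonal they sit on, i.e.\ according to the value $i=q-p$, which ranges over $-M,\ldots,M$. Every ordered pair $(p,q)$ with $0\le p,q\le M$ lies on exactly one diagonal, so this regrouping partitions the sum and $G_\bfU(t,s)=\sum_{i=-M}^M D_i(t,s)$, where $D_i(t,s)$ collects the monomials with $q-p=i$. The whole argument then reduces to evaluating each $D_i$ and identifying it with $t^{(|i|-i)/2}s^{(|i|+i)/2}\,\tsu_i(ts)$.

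First I would treat $i\ge 0$. Here $q=p+i$ with $p$ running from $0$ to $M-i$, so $D_i(t,s)=\sum_{p=0}^{M-i} u_{p,p+i}\,t^p s^{p+i}=s^i\sum_{p=0}^{M-i}u_{p,p+i}(ts)^p=s^i\,\tsu_i(ts)$, directly from the definition of the diagonal gf. For $i<0$ I would instead write $p=q+|i|$ with $q$ running from $0$ to $M-|i|$, giving $D_i(t,s)=\sum_{q=0}^{M-|i|}u_{q+|i|,q}\,t^{q+|i|}s^q=t^{|i|}\,\tsu_i(ts)$, where $\tsu_i$ is the gf of the sub-diagonal read from its top-left entry. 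The only point needing attention is matching these two prefactors with the single expression $t^{(|i|-i)/2}s^{(|i|+i)/2}$ from the statement: for $i\ge 0$ it equals $t^0 s^i=s^i$, and for $i<0$ it equals $t^{|i|}s^0=t^{|i|}$, reproducing both cases exactly. Summing $D_i$ over $i$ then yields the first formula.

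For the symmetric case I would observe that if $\bfU=\bfU^\top$ then $u_{q+i,q}=u_{q,q+i}$, so the $i$-th super-diagonal and the $i$-th sub-diagonal carry identical entries and hence $\tsu_{-i}(\tau)=\tsu_i(\tau)$ for every $i\ge 1$. Splitting the first formula into the contributions $i=0$, $i\ge 1$, and $i\le -1$, the combined contribution of the pair $\{i,-i\}$ is $s^i\tsu_i(ts)+t^i\tsu_{-i}(ts)=(t^i+s^i)\tsu_i(ts)$, while $i=0$ contributes $\tsu_0(ts)$; collecting these gives the stated simplification.

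The computation is essentially bookkeeping, so I do not expect a genuine obstacle. The one place to stay vigilant is the orientation and indexing of the diagonals for negative $i$: the constant term of $\tsu_i(\tau)$ must correspond to the diagonal entry nearest the top-left corner of the matrix, so that the shift absorbed into the prefactor $t^{(|i|-i)/2}s^{(|i|+i)/2}$ comes out correctly. Keeping this convention consistent between the definition of $\tsu_i$ and the exponent bookkeeping is really the crux of the argument.
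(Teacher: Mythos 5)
Your proof is correct and takes essentially the same route as the paper: both regroup the double sum $\sum_{p,q}u_{p,q}t^ps^q$ by the diagonal index $i=q-p$, evaluate the super- and sub-diagonal contributions as $s^i\,\tsu_i(ts)$ and $t^{|i|}\,\tsu_i(ts)$ (with the sub-diagonal gf anchored at the top-left entry, matching the paper's convention $\tsu_{-i}(ts)=\sum_{j=0}^{M-i}u_{j+i,j}(ts)^j$), and then verify that the single prefactor $t^{(|i|-i)/2}s^{(|i|+i)/2}$ reproduces both cases. Your explicit pairing of $i$ with $-i$ in the symmetric case is the same immediate observation the paper leaves implicit, so there is nothing to correct.
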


\begin{proof}
\begin{eqnarray*}
  G_\bfU(t,s) &=& \sum_{j,k=0}^M u_{j,k} t^j s^k = \sum_{i=-M}^M \sum_{k-j=i} u_{j,k} t^j s^k =\\
  &=& \sum_{i=1}^M t^i \sum_{j=0}^{M-i} u_{j+i,j} t^j s^j + \sum_{i=1}^{M} s^i\sum_{j=0}^{M-i} u_{j,j+i} t^j s^j +  \sum_{i=0}^M u_{i,i} t^i s^i = \\
  &=& \sum_{i=0}^M t^i \tsu_{-i}(ts) + \sum_{i=1}^{M} s^i \tsu_{i}(ts)
  = \sum_{i=-M}^{0} t^{-i} \tsu_{i}(ts) + \sum_{i=1}^{M} s^i \tsu_{i}(ts) =\\
  &=&\sum_{i=-M}^{M} t^{(|i|-i)/2} s^{(|i|+i)/2} \tsu_{i} (ts).
\end{eqnarray*}
\end{proof}

\subsection{Convolution of matrices expressed through convolution of diagonals; banded matrices}

\begin{lemma}\label{lem:ABCgf}	
Let $\bfA=(a_{i,j})_{i,j=0}^A$, $\bfB=(b_{i, j})_{i,j=0}^B$  and $\bfC=\bfA\ast\bfB$.
For a given integer $i$, let $\tsa_i(t)= G_{\diag_i(\bfA)}(t)= \sum_{j=0}^{A-|i|} a_{j,j+i} t^j$ ($|i|\leq A$),
$\tsb_i(t)= G_{\diag_i(\bfB)}(t)$ ($|i|\leq B$) and $\tsc_i(t)=G_{\diag_i(\bfC)}(t)$ ($|i|\leq C = A+B$) be the generating functions
of the diagonals $\diag_i(\bfA)$, $\diag_i(\bfB)$ and $\diag_i(\bfC)$, respectively. Then
\begin{equation}\label{eq:W_L_R3}
  \tsc_i(t) = \sum_{j+k = i} t^{(|j|+|k| - |j+k|)/2} \tsa_j(t) \tsb_k(t)  \;\;(i = -C, \ldots, C)\, .
\end{equation}
\end{lemma}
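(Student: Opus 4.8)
The plan is to move to generating functions and exploit the multiplicative identity $G_\bfC(t,s)=G_\bfA(t,s)G_\bfB(t,s)$, which is equivalent to $\bfC=\bfA\ast\bfB$, and then to read off the diagonal generating functions by comparing both sides in the diagonal form supplied by Lemma \ref{lem:gf2}. The matching principle I would rely on is the following separation observation: in the expansion $G_\bfU(t,s)=\sum_{i} t^{(|i|-i)/2}s^{(|i|+i)/2}\tsu_i(ts)$, the $i$-th summand involves only monomials $t^a s^b$ with $b-a=i$, because $\tsu_i(ts)$ contributes equally to the powers of $t$ and $s$ while the prefactor contributes exactly $i$ to $b-a$. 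Hence grouping a double power series by the value of $b-a$ recovers each $\tsu_i$ uniquely, and this is precisely the tool that lets me identify the $\tsc_i$.

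First I would write $G_\bfA$ and $G_\bfB$ in diagonal form via Lemma \ref{lem:gf2} and multiply:
\[
G_\bfA(t,s)G_\bfB(t,s)=\sum_{|j|\le A}\sum_{|k|\le B} t^{(|j|-j)/2+(|k|-k)/2}\,s^{(|j|+j)/2+(|k|+k)/2}\,\tsa_j(ts)\,\tsb_k(ts).
\]
By the separation observation applied to each factor, every monomial arising from the $(j,k)$ summand satisfies $b-a=j+k$; grouping by $i:=j+k$ therefore isolates exactly the contribution to the $i$-th diagonal of $\bfC$, with $j$ ranging over $\max(-A,i-B)\le j\le\min(A,i+B)$ and $k=i-j$.

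Next, for a fixed $i$ and each admissible pair $(j,k)$ with $j+k=i$, I would factor out the canonical prefactor $t^{(|i|-i)/2}s^{(|i|+i)/2}$ of the $i$-th term in Lemma \ref{lem:gf2}. A short computation shows that the residual powers of $t$ and of $s$ are both equal to $(|j|+|k|-|j+k|)/2$, so the residual factor is $(ts)^{(|j|+|k|-|j+k|)/2}\,\tsa_j(ts)\,\tsb_k(ts)$, a function of the single variable $ts$ alone. Summing over all $(j,k)$ with $j+k=i$ and comparing with the diagonal expansion $G_\bfC(t,s)=\sum_{|i|\le C} t^{(|i|-i)/2}s^{(|i|+i)/2}\tsc_i(ts)$, the separation principle forces the factors of $\tsc_i(ts)$ to agree with $\sum_{j+k=i}(ts)^{(|j|+|k|-|j+k|)/2}\tsa_j(ts)\tsb_k(ts)$; renaming $ts$ as $t$ yields \eqref{eq:W_L_R3}.

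The one step I would verify carefully — and which I regard as the only real obstacle — is the exponent bookkeeping. Using $i=j+k$, the residual $t$-exponent is
\[
\frac{|j|-j}{2}+\frac{|k|-k}{2}-\frac{|i|-i}{2}=\frac{|j|+|k|-|j+k|}{2},
\]
and the residual $s$-exponent reduces to the same value by the symmetric computation; it is essential that these two coincide so that the residual depends on $ts$ only. One must also check that this common value is a non-negative integer, which holds because $|j|+|k|\ge|j+k|$ and $|j|+|k|\equiv|j+k|\pmod 2$, so the residual is a legitimate polynomial factor and the identification is valid. Everything else is routine reindexing.
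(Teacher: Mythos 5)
Your proof is correct and takes essentially the same route as the paper's: both expand $G_\bfA$ and $G_\bfB$ in diagonal form via Lemma~\ref{lem:gf2}, multiply, regroup the product by $i=j+k$, and carry out the same exponent bookkeeping, identifying $\tsc_i$ through the uniqueness of the decomposition of a bivariate series by the value of $b-a$ in monomials $t^a s^b$. Your explicit checks of the separation principle and of the parity and non-negativity of $(|j|+|k|-|j+k|)/2$ simply make precise what the paper leaves implicit (compare Remark~\ref{rem:diag_simplified}).
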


\begin{proof}
In view of Lemma~\ref{lem:gf2}, we should prove
\begin{eqnarray*}
  \sum_{i=-C}^{C} t^{(|i|-i)/2} s^{(|i|+i)/2} \tsc_{k}(ts) =
  \sum_{i=-C}^{C} t^{(|i|-i)/2} s^{(|i|+i)/2} \sum_{j+k = i} (ts)^{(|j|+|k| - |j+k|)/2} \tsa_j(ts) \tsb_k(ts).
\end{eqnarray*}
We have:
\begin{eqnarray*}
  &&\sum_{i=-C}^{C} t^{(|i|-i)/2} s^{(|i|+i)/2} \tsc_{k}(ts)  \\
  &=&\left(\sum_{j=-A}^{A} t^{(|j|-j)/2} s^{(|j|+j)/2} \tsa_{j}(ts)\right)
  \left(\sum_{k=-B}^{B} t^{(|k|-k)/2} s^{(|k|+k)/2} \tsb_{k}(ts)\right)\\
  &=&
  \sum_{j=-A}^{A} \sum_{k=-B}^{B} t^{(|j|-j)/2} s^{(|j|+j)/2} \tsa_{j}(ts)t^{(|k|-k)/2} s^{(|k|+k)/2} \tsb_{k}(ts)\\
  &=&\sum_{i=-(A+B)}^{A+B} \sum_{j+k=i} t^{(|j|-j)/2} s^{(|j|+j)/2} t^{(|k|-k)/2} s^{(|k|+k)/2} \tsa_{j}(ts)\tsb_{k}(ts)\\
  &=&\sum_{i=-C}^{C} \sum_{j+k=i} t^{(|j|+|k|-(j+k))/2} s^{(|j|+|k|+(j+k))/2} \tsa_{j}(ts)\tsb_{k}(ts).
\end{eqnarray*}
Since $j+k=i$ within the sum, the proof is complete.
\end{proof}

\begin{remark}\label{rem:diag_simplified} For any real  $j$ and $k$, we have:
\begin{eqnarray*}
  {(|j|+|k| - |j+k|)/2} =
  \begin{cases}
    0, & \mbox{if \ \ } jk \ge 0\\
    {|k|} & \mbox{if \ \ }  jk < 0, |j| \ge |k|; \\
    {|j|} & \mbox{if \ \ }  jk < 0, |j| < |k|.
  \end{cases}
\end{eqnarray*}
\end{remark}

The following corollary is a reformulation of  Lemma~\ref{lem:ABCgf} using the explicit form for
the diagonals of $\bfC$.

\begin{corollary}\label{lem:conv}
Let matrices $\bfA$, $\bfB$
and $\bfC=\bfA\ast\bfB$ be as in Lemma~\ref{lem:ABCgf}. Then
the $i$-th diagonal of $\bfC$ is
\begin{equation}\label{eq:conv_diag}
  {\rm diag}_{i}(\bfC) = \sum_{j+k = i}
  \begin{pmatrix}
    0_{(|j|+|k| - |j+k|)/2} \\
    \diag_j(\bfA) \ast \diag_k(\bfB) \\
	0_{(|j|+|k| - |j+k|)/2}
  \end{pmatrix},
\end{equation}
where $i = -C, \ldots, C$, $C=A+B$, and $0_m \in \spR^{m}$ is a vector of zeros of size $m$.
\end{corollary}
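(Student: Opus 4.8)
The plan is to translate the generating-function identity \eqref{eq:W_L_R3} of Lemma~\ref{lem:ABCgf} back into an identity between the diagonals viewed as vectors, using only two elementary dictionary entries between vectors and their univariate generating functions. First, recall from the introduction that $\bfc = \bfa \ast \bfb$ holds if and only if $G_\bfc(t) = G_\bfa(t) G_\bfb(t)$; in particular the generating function of $\diag_j(\bfA) \ast \diag_k(\bfB)$ is exactly $\tsa_j(t)\tsb_k(t)$. Second, multiplying a generating function by $t^m$ corresponds to prepending $m$ zeros to the underlying vector (a shift of the index by $m$), whereas appending zeros to a vector leaves its generating function unchanged. With these two facts, the column vector appearing in the $(j,k)$ summand on the right-hand side of \eqref{eq:conv_diag}, namely $\diag_j(\bfA)\ast\diag_k(\bfB)$ flanked by $m \eqdef (|j|+|k|-|j+k|)/2$ zeros on top and $m$ on the bottom, has generating function precisely $t^m\,\tsa_j(t)\tsb_k(t)$: the leading block realizes the factor $t^m$, and the trailing block is inert.

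Summing over all pairs with $j+k=i$ and using that the generating function of a sum of vectors is the sum of the generating functions, I would obtain that the whole right-hand side of \eqref{eq:conv_diag} has generating function $\sum_{j+k=i} t^{(|j|+|k|-|j+k|)/2}\,\tsa_j(t)\tsb_k(t)$, which by \eqref{eq:W_L_R3} equals $\tsc_i(t)=G_{\diag_i(\bfC)}(t)$. Since a polynomial determines its coefficient sequence uniquely, this upgrades at once to an entrywise equality of the two vectors, which is the claimed identity.

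The one point requiring care — and the step I expect to be the real, if modest, obstacle — is the bookkeeping of lengths, since adding vectors presupposes a common dimension and this is exactly what the leading and trailing blocks of $m$ zeros provide. I would verify that each padded summand has the right length: $\diag_j(\bfA)\ast\diag_k(\bfB)$ has length $(A-|j|+1)+(B-|k|+1)-1 = A+B-|j|-|k|+1$, and adjoining $2m = |j|+|k|-|j+k|$ zeros brings this to $A+B-|i|+1$ (using $j+k=i$), which is exactly the length of $\diag_i(\bfC)$. Hence every term of the sum lives in the same space $\spR^{A+B-|i|+1}$, the vector sum is well defined, and the generating-function comparison of the previous paragraph is genuinely an equality of vectors of equal finite length rather than merely of formal polynomials.
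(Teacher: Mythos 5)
Your proof is correct and takes essentially the same route the paper intends: the paper states Corollary~\ref{lem:conv} without a separate proof, calling it a reformulation of Lemma~\ref{lem:ABCgf}, i.e.\ precisely the translation of the generating-function identity \eqref{eq:W_L_R3} back to vectors via the dictionary you use (product of gf's $\leftrightarrow$ convolution, multiplication by $t^m$ $\leftrightarrow$ prepending $m$ zeros, trailing zeros inert). Your explicit length check that each padded summand lies in $\spR^{A+B-|i|+1}$, the space containing $\diag_i(\bfC)$, merely spells out the bookkeeping the paper leaves implicit.
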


In the case of banded matrices, Corollary~\ref{lem:conv} takes the following form.
\begin{corollary}\label{prop:p1p2}	
Let $\bfA=(a_{i,j})_{i,j=0}^A$ be a $(2p_1+1)$-diagonal matrix with $p_1\leq A$, $\bfB=(b_{i, j})_{i,j=0}^B$ be $(2p_2+1)$-diagonal with $p_2\leq B$
and $\bfC=\bfA\ast\bfB$. Then
$\bfC$ is $(2p+1)$-diagonal with $p=p_1 + p_2$ and
the $i$-th diagonal $\diag_i(\bfC)$ of $\bfC$ $(i = -p, \ldots, p)$ is given by \eqref{eq:conv_diag}; its gf is given by \eqref{eq:W_L_R3}.
\end{corollary}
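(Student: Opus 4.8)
The plan is to obtain this as an immediate specialization of Corollary~\ref{lem:conv} and Lemma~\ref{lem:ABCgf}, the only new ingredient being that the banded structure forces most diagonals of $\bfA$ and $\bfB$ to vanish. First I would translate the banding hypothesis into diagonal language: the assumption that $\bfA$ is $(2p_1+1)$-diagonal is exactly the statement that $\diag_j(\bfA) = \bfzero$, and hence $\tsa_j(t) \equiv 0$, whenever $|j| > p_1$; similarly $\diag_k(\bfB) = \bfzero$ and $\tsb_k(t) \equiv 0$ whenever $|k| > p_2$.

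With this observation, formulas \eqref{eq:conv_diag} and \eqref{eq:W_L_R3} already hold verbatim by Corollary~\ref{lem:conv} and Lemma~\ref{lem:ABCgf}, so the substance of the proof reduces to showing that the nonzero diagonals of $\bfC$ are confined to the band $|i| \le p = p_1 + p_2$. I would read this off the right-hand side of \eqref{eq:conv_diag}: the summand indexed by a pair $(j,k)$ with $j + k = i$ carries the factor $\diag_j(\bfA) \ast \diag_k(\bfB)$, which is the zero vector unless both $|j| \le p_1$ and $|k| \le p_2$.

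The single step requiring care---and the main (if modest) obstacle---is the support bound via the triangle inequality: any surviving summand satisfies $|i| = |j + k| \le |j| + |k| \le p_1 + p_2 = p$. Contrapositively, when $|i| > p$ every summand in \eqref{eq:conv_diag} contains a vanishing factor, so $\diag_i(\bfC) = \bfzero$ for $p < |i| \le A + B$, which is precisely the assertion that $\bfC$ is $(2p+1)$-diagonal. For $|i| \le p$ the same support constraint merely restricts the summation in \eqref{eq:conv_diag} to indices with $|j| \le p_1$ and $|k| \le p_2$, leaving the stated diagonal formula unchanged.

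Finally, the generating-function claim follows by the identical argument applied to Lemma~\ref{lem:ABCgf}: substituting $\tsa_j \equiv 0$ for $|j| > p_1$ and $\tsb_k \equiv 0$ for $|k| > p_2$ into \eqref{eq:W_L_R3} collapses the sum for $\tsc_i(t)$ to the in-band indices and yields $\tsc_i(t) \equiv 0$ for $|i| > p$. Since the whole argument is bookkeeping on the supports of the diagonals, no deeper difficulty is expected beyond verifying the triangle-inequality range.
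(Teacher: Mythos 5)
Your proof is correct and takes exactly the route the paper intends: the paper states this corollary without a separate proof, treating it as an immediate specialization of Corollary~\ref{lem:conv} and Lemma~\ref{lem:ABCgf} to the banded case, and your support bookkeeping (out-of-band diagonals vanish, so any surviving summand satisfies $|i|=|j+k|\le |j|+|k|\le p_1+p_2$) is precisely the omitted verification. Nothing further is needed.
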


When $p_2 = 0$ so that $\bfB$ is diagonal, Corollary~\ref{prop:p1p2} gives  the following particular case.

\begin{corollary} \label{prop:p_diag}
Let $\bfL$ be a $(2p+1)$-diagonal matrix and $\bfR$ be diagonal with $\bfb$ on the main diagonal. Then $\bfC=\bfA\ast\bfB$ is $(2p+1)$-diagonal with $\diag_i(\bfC) = \diag_i(\bfL) \ast \bfb$, $i = -p, \ldots, p$; in terms of gf, we have $\tsc_i(t) = \tsa_i(t) \tsb_0(t)$, $i = -p, \ldots, p$.
\end{corollary}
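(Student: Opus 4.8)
The plan is to obtain this as an immediate specialization of Corollary~\ref{prop:p1p2} to the degenerate case $p_2 = 0$. The key observation is that when $\bfR = \bfB$ is diagonal, its only nonzero diagonal is $\diag_0(\bfB) = \bfb$, so in the language of generating functions of diagonals we have $\tsb_k(t) \equiv 0$ for every $k \neq 0$, and correspondingly $\diag_k(\bfB) = \bfzero$ for $k \neq 0$.

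First I would invoke Corollary~\ref{prop:p1p2} with $p_1 = p$ and $p_2 = 0$, which gives at once that $\bfC = \bfA \ast \bfB$ is $(2p+1)$-diagonal (since $p_1 + p_2 = p$) and that each diagonal $\diag_i(\bfC)$ for $i = -p, \ldots, p$ is given by the general formula \eqref{eq:conv_diag}. Then I would collapse the sum over the pairs with $j + k = i$ appearing in \eqref{eq:conv_diag}: because $\diag_k(\bfB) = \bfzero$ whenever $k \neq 0$, every summand with $k \neq 0$ vanishes, leaving only the term with $k = 0$, and hence $j = i$. For that surviving pair the padding length is $(|j| + |k| - |j+k|)/2 = (|i| + 0 - |i|)/2 = 0$, so no zero-vectors are prepended or appended and the single remaining summand is exactly $\diag_i(\bfA) \ast \diag_0(\bfB) = \diag_i(\bfL) \ast \bfb$, which is the asserted formula for the diagonals.

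The generating-function identity $\tsc_i(t) = \tsa_i(t)\,\tsb_0(t)$ would follow in the same fashion from \eqref{eq:W_L_R3}: substituting $\tsb_k \equiv 0$ for $k \neq 0$ again leaves only the index pair $k = 0$, $j = i$, whose prefactor $t^{(|i| + 0 - |i|)/2} = t^0 = 1$ is trivial, so $\tsc_i(t) = \tsa_i(t)\,\tsb_0(t)$ for $i = -p, \ldots, p$.

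Honestly there is no serious obstacle here, since the statement is a direct corollary and everything reduces to substitution. The only point deserving a moment's care is verifying that the zero-padding in \eqref{eq:conv_diag} genuinely disappears for the one surviving index pair, i.e.\ that $(|i| + 0 - |i|)/2 = 0$; this is precisely the first branch of Remark~\ref{rem:diag_simplified} applied with $k = 0$ (so that $jk = 0 \ge 0$), which confirms that matrix convolution with a diagonal matrix acts on each diagonal of $\bfL$ simply as ordinary vector convolution with $\bfb$, without any shift.
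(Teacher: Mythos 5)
Your proposal is correct and matches the paper exactly: the paper gives no separate proof, stating the corollary as the immediate specialization of Corollary~\ref{prop:p1p2} (hence of \eqref{eq:conv_diag} and \eqref{eq:W_L_R3}) to $p_2=0$, which is precisely your argument. Your explicit check that the zero-padding $(|j|+|k|-|j+k|)/2$ vanishes for the surviving pair $j=i$, $k=0$ is the only detail the paper leaves tacit, and you handled it correctly via the first branch of Remark~\ref{rem:diag_simplified}.
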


\section{Autocovariance matrices and their inverses for AR(1) and AR(2) models}

\subsection{Inverse autocovariance matrices}

It is well known that the inverse to the autocovariance matrix of AR($p$) is positive-definite
symmetric $(2p+1)$-diagonal matrix \cite[p.534]{Shaman1975}.
Below we consider  explicit forms of  $\bm\Sigma^{-1}$ for the AR(1) and AR(2) models.

\subsubsection{AR(1)}
Let $\bm\xi=(\xi_0,\xi_1, \ldots, \xi_{W})^\top$
follows the AR(1) process
\begin{eqnarray}\label{eq:AR1}
  \xi_j=\phi_1\xi_{j-1}+\varepsilon_j,
\end{eqnarray}
where   $j=1, \ldots, W$, {$\phi_1\neq 0$}, and $ \varepsilon_1, \varepsilon_2, \ldots$ are i.i.d. normal random variables ${\cal N} (0,1)$;
$\xi_j$ are ${\cal N} (0,\sigma^2)$ for all $j=0, \ldots, W$, where $\sigma^2=1/(1-\phi_1^2).$
The condition of stationarity of the process \eqref{eq:AR1} is $|\phi_1|<1$.
The covariance matrix $\bm\Sigma$ of the vector $\bm\xi$ is
$\bm\Sigma = \sigma^2( \phi_1^{|i-j|})_{i,j=0}^{W}$.
The inverse of $\bm\Sigma$ is the  tridiagonal matrix {$\bm\Sigma^{-1}= \bfW \in \spR^{(W+1)\times (W+1)}$,
} where
\begin{eqnarray}\label{eq:AR1W}
  \bfW=
  \begin{pmatrix}
    1&k_{1}&0&0  &\ldots\\
    k_1 & k_0    &k_1&0&\ldots\\
    0& k_1    &k_0&k_1&0\\
    \vdots&\ddots&\ddots&\ddots&\ddots&\ddots\\
    && 0    &k_1&k_0&k_1\\
    &&0& 0&k_{1}&1\\
  \end{pmatrix}~,
\end{eqnarray}
$k_0=1+\phi_1^2$, $k_1=-\phi_1$.
Note that the matrix \eqref{eq:AR1W} is defined for any $\phi_1$, not necessarily for $|\phi_1|<1$.

\subsubsection{AR(2)}
Let $\bm\xi=(\xi_0,\xi_1, \ldots, \xi_{W})^\top$
follows the AR(2) process
\begin{eqnarray}\label{eq:AR2}
  \xi_j=\phi_1\xi_{j-1}+\phi_2\xi_{j-2}+\varepsilon_j\; \;(j=2, \ldots, W)
\end{eqnarray}
where   $\varepsilon_j$ are  i.i.d. ${\cal N} (0,1)$
and $\xi_j$ are ${\cal N} (0,\sigma^2)$ for all $j=0, \ldots, W$; here $\sigma^2=(1-\phi_2)/[(1+\phi_1-\phi_2)(1-\phi_1-\phi_2)(1+\phi_2)]$.
 The conditions of stationarity are $\phi_2+|\phi_1|<1$ and $|\phi_2|<1$. The region of stationarity of the AR(2) model is depicted in Figure~1. %\ref{fig:ar2}.
 The inverse of the covariance matrix $\bm\Sigma$ is a  five-diagonal matrix $\bm\Sigma^{-1}= \bfW  =(w_{i,j})_{i,j=0}^W$, where
\begin{eqnarray}\label{eq:AR2W}
  \bfW=
  \begin{pmatrix}
    1&k_{12}&k_2&0  &0&0&\ldots\\
    k_{21}&k_{22}&k_1&k_2&0&0&\ldots\\
    k_2 & k_1    &k_0&k_1&k_2&0&\ldots\\
    0&k_2 & k_1    &k_0&k_1&k_2\\
    \vdots&\ddots&\ddots&\ddots&\ddots&\ddots&\ddots\\
    &&0&k_2& k_1    &k_0&k_1&k_2\\
    &&0&0&k_2&k_1&k_{22}&k_{12}\\
    &&0&0&0& k_2&k_{21}&1\\
  \end{pmatrix}~,
\end{eqnarray}
$k_0=1+\phi_1^2+\phi_2^2$, $k_1=-\phi_1+\phi_1\phi_2$, $k_2=-\phi_2$,
$k_{12}=k_{21}=-\phi_1$, $k_{22}=1+\phi_1^2$.
The matrix \eqref{eq:AR2W} is defined for any $\phi_1$ and $\phi_2$.

\begin{center}
\begin{figure}[h]
\label{fig:ar2}

\begin{tikzpicture}
\centering
\filldraw[black] (-8,-1) circle (0pt) node[anchor=east] {};
\filldraw[black] (-3,-1) circle (1pt) node[anchor=east] {$-1$};
\filldraw[black] (-3,1) circle (1pt) node[anchor=east] {$1$};
\filldraw[black] (-2,-2) circle (1pt) node[anchor=north] {$-2$};
\filldraw[black] (2,-2) circle (1pt) node[anchor=north] {$2$};
\filldraw[black] (0,-2) circle (1pt) node[anchor=north] {$0$};
\draw[->] (-3,-2) -> (-3,2.5) node at (-3,2.5) [right] {$\phi_2$};
\draw[->] (-3,-2) -> (3,-2) node at (3,-2) [right] {$\phi_1$};
\draw[blue, ultra thick,fill=blue!3] (0,1) -- (2,-1) -- (-2,-1) -- cycle;
\filldraw[black] (0,1) circle (3pt) node[anchor=south] {$(0,1)$};
\filldraw[black] (2,-1) circle (3pt) node[anchor=north] {$(2,-1)$};
\filldraw[black] (-2,-1) circle (3pt) node[anchor=north] {$(-2,-1)$};
\end{tikzpicture}

\caption{Region of stationarity of the AR(2) model and critical points}
\end{figure}
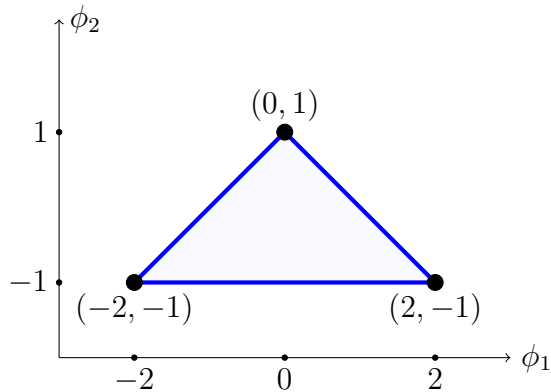
\end{center}

\subsection{Inverses of autocovariance matrices and their generating functions }

Let $\bfW =c \bm\Sigma^{-1}=(w_{i,j})_{i,j=0}^W$, where $\bm\Sigma$ is the autocovariance matrix of AR($p$) with some $p>0$, and $c$ is chosen so that $w_{0,0}=1$ like in \eqref{eq:AR1W} and \eqref{eq:AR2W}.
{Recall that the matrix $\bfW$ is banded.}

Consider the relation between diagonals of $\bfW$ and their generating functions.
Denote $\tsw_k(t) \eqdef G_{\diag_k(\bfW)}(t)$, $t\in\spC$.
For $k\ge 0$, an explicit formula for $\tsw_k$ is  $\tsw_k(t)= \sum_{j=0}^{W-k} w_{j,j+k} t^j$.

\subsubsection{Generating function of a vector of ones}
Below we shall frequently use the following gf.
Let $\bfc_M=(1, \ldots, 1)^\top \in \spR^{M+1}$. Then $$\tsC_M(t) \eqdef G_{\bfc_M}(t) = 1+\ldots+t^M = \frac{1-t^{M+1}}{1-t\;\;} \,.$$
The following convenient formula connects polynomials $\tsC_{M+k}(t)$ and $\tsC_M(t)$ with $k>0$:
\begin{eqnarray}\label{eq:cm}
   \tsC_{M+k}(t) &=& t^k \tsC_{M}(t) + \tsC_{k-1}(t).
\end{eqnarray}

\subsubsection{AR(1)}
Consider the AR(1) model \eqref{eq:AR1} with regression coefficient $\phi_1$. From \eqref{eq:AR1W}, we have
$$
  \tsw_1(t) = -\phi_1 \tsC_{W-1}(t)\;\;{\rm and }\;\;
  \tsw_0(t) = \tsC_{W}(t) + \phi_1^2 t \tsC_{W-2}(t).
$$
Applying \eqref{eq:cm} with $(k,M)=(1,W-1)$ and $(k,M)=(1,W-2)$, we obtain the following lemma.

\begin{lemma}\label{lem:AR1w}
For the AR(1) model \eqref{eq:AR1},
\be\label{eq:AR1w}
  \tsw_1(t) = -\phi_1 \tsC_{W-1}(t), \;\;
  \tsw_0(t) = t \tsC_{W-1}(t) + 1 + \phi_1^2 (\tsC_{W-1}(t)-1).
\ee
\end{lemma}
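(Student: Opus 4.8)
The plan is to derive the two generating-function identities in \eqref{eq:AR1w} directly from the tridiagonal structure of $\bfW$ in \eqref{eq:AR1W}, using the recursion \eqref{eq:cm} to absorb the boundary corrections that make $\bfW$ differ from a perfectly Toeplitz band matrix. First I would read off the diagonals of $\bfW$ from the explicit form \eqref{eq:AR1W}. The first super-diagonal $\diag_1(\bfW)$ is the constant vector $(k_1,\ldots,k_1)^\top = (-\phi_1,\ldots,-\phi_1)^\top$ of length $W$, so immediately
\[
  \tsw_1(t) = -\phi_1\bigl(1 + t + \cdots + t^{W-1}\bigr) = -\phi_1\,\tsC_{W-1}(t),
\]
which is the first claimed identity and requires no further work.

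The substantive computation is the main diagonal. From \eqref{eq:AR1W} its entries are $w_{0,0} = w_{W,W} = 1$ at the two corners and $w_{j,j} = k_0 = 1+\phi_1^2$ for the $W-1$ interior positions $j = 1,\ldots,W-1$. Hence
\[
  \tsw_0(t) = 1 + (1+\phi_1^2)\bigl(t + t^2 + \cdots + t^{W-1}\bigr) + t^W,
\]
and I would reorganise this as the stated pre-simplified expression $\tsw_0(t) = \tsC_W(t) + \phi_1^2\, t\,\tsC_{W-2}(t)$ by checking that $\tsC_W(t)$ supplies the coefficient $1$ at every power $t^0,\ldots,t^W$ and that $\phi_1^2 t\,\tsC_{W-2}(t) = \phi_1^2(t + \cdots + t^{W-1})$ supplies the extra $\phi_1^2$ exactly at the interior powers $t^1,\ldots,t^{W-1}$, matching the corner values of $1$.

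Finally I would apply the recursion \eqref{eq:cm} to rewrite this in the target form. With $(k,M) = (1,W-1)$, formula \eqref{eq:cm} gives $\tsC_W(t) = t\,\tsC_{W-1}(t) + \tsC_0(t) = t\,\tsC_{W-1}(t) + 1$; with $(k,M) = (1,W-2)$ it gives $\tsC_{W-1}(t) = t\,\tsC_{W-2}(t) + 1$, i.e. $t\,\tsC_{W-2}(t) = \tsC_{W-1}(t) - 1$. Substituting both into the pre-simplified $\tsw_0$ yields
\[
  \tsw_0(t) = t\,\tsC_{W-1}(t) + 1 + \phi_1^2\bigl(\tsC_{W-1}(t) - 1\bigr),
\]
which is precisely the second identity in \eqref{eq:AR1w}. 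I do not anticipate a genuine obstacle here: the only point demanding care is bookkeeping of the two corner entries $w_{0,0}=w_{W,W}=1$ versus the interior value $k_0$, since it is exactly the deviation of these corners from the Toeplitz pattern that produces the $-1$ correction term via \eqref{eq:cm}; getting the index ranges in $\tsC_{W-1}$, $\tsC_{W-2}$ right is the whole content of the lemma.
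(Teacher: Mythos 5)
Your proof is correct and takes essentially the same route as the paper's: you read the diagonals off \eqref{eq:AR1W} to obtain $\tsw_1(t) = -\phi_1 \tsC_{W-1}(t)$ and the pre-simplified form $\tsw_0(t) = \tsC_{W}(t) + \phi_1^2 t\, \tsC_{W-2}(t)$, then apply \eqref{eq:cm} with $(k,M)=(1,W-1)$ and $(k,M)=(1,W-2)$, exactly as the paper does. The only difference is that you make explicit the coefficient-by-coefficient bookkeeping of the corner entries $w_{0,0}=w_{W,W}=1$ versus the interior value $k_0$, which the paper leaves implicit.
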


\subsubsection{AR(2)}
Consider the AR(2) model \eqref{eq:AR2} with regression coefficients $\phi_1$ and $\phi_2$. From \eqref{eq:AR2W}, we obtain
\begin{equation*}
\begin{aligned}
  \tsw_2(t) &= -\phi_2 \tsC_{W-2}(t),\\
  \tsw_1(t) &= -\phi_1 \tsC_{W-1}(t) +\phi_1 \phi_2 t \tsC_{W-3}(t),\\
  \tsw_0(t) &= \tsC_{W}(t) + \phi_1^2 t \tsC_{W-2}(t) + \phi_2^2 t^2 \tsC_{W-4}(t).
\end{aligned}
\end{equation*}

Applying \eqref{eq:cm} with {$k=1$, $M=W-3, W-2$  and with $k=2$, $M=W-4, W-2$}, we obtain the following lemma.

\begin{lemma}\label{lem:AR2w}
For the AR(2) model \eqref{eq:AR2},
\begin{equation}\label{eq:AR2w}
\begin{aligned}
  \tsw_2(t) &= -\phi_2 \tsC_{W-2}(t),\\
  \tsw_1(t) &= -\phi_1 (t \tsC_{W-2}(t)+1) +\phi_1 \phi_2(\tsC_{W-2}(t)-1),\\
  \tsw_0(t) &= t^2 \tsC_{W-2}(t) + t + 1 + \phi_1^2 t \tsC_{W-2}(t) + \phi_2^2 (\tsC_{W-2}(t)-(t+1)).
\end{aligned}
\end{equation}
\end{lemma}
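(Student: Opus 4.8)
The plan is to obtain \eqref{eq:AR2w} from the three preliminary expressions for $\tsw_2(t)$, $\tsw_1(t)$ and $\tsw_0(t)$ displayed immediately before the lemma, which are themselves read off the three distinct diagonals of the five-diagonal matrix \eqref{eq:AR2W} (the interior entries being $k_0,k_1,k_2$ and the boundary corrections being $k_{12}=k_{21}=-\phi_1$ and $k_{22}=1+\phi_1^2$). The single tool needed is the recurrence \eqref{eq:cm}, $\tsC_{M+k}(t)=t^k\tsC_M(t)+\tsC_{k-1}(t)$, applied so as to rewrite every polynomial $\tsC_{W}, \tsC_{W-1}, \tsC_{W-3}, \tsC_{W-4}$ that appears in terms of the single polynomial $\tsC_{W-2}(t)$. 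Normalizing all three diagonals to this common $\tsC_{W-2}(t)$ is presumably what makes them directly comparable in the subsequent deconvolution analysis.

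First, $\tsw_2(t)=-\phi_2\tsC_{W-2}(t)$ is already in the required form, so no manipulation is needed. For $\tsw_1$ I would apply \eqref{eq:cm} twice: with $(k,M)=(1,W-2)$ to get $\tsC_{W-1}(t)=t\tsC_{W-2}(t)+\tsC_0(t)=t\tsC_{W-2}(t)+1$, and with $(k,M)=(1,W-3)$, rearranged into $t\tsC_{W-3}(t)=\tsC_{W-2}(t)-\tsC_0(t)=\tsC_{W-2}(t)-1$. Substituting both into the preliminary expression $-\phi_1\tsC_{W-1}(t)+\phi_1\phi_2 t\tsC_{W-3}(t)$ collapses it to $-\phi_1(t\tsC_{W-2}(t)+1)+\phi_1\phi_2(\tsC_{W-2}(t)-1)$, the claimed formula.

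For $\tsw_0$ the same idea is used with step size $k=2$. I would apply \eqref{eq:cm} with $(k,M)=(2,W-2)$ to get $\tsC_{W}(t)=t^2\tsC_{W-2}(t)+\tsC_1(t)=t^2\tsC_{W-2}(t)+1+t$, and with $(k,M)=(2,W-4)$, rearranged into $t^2\tsC_{W-4}(t)=\tsC_{W-2}(t)-\tsC_1(t)=\tsC_{W-2}(t)-(t+1)$. Inserting these into $\tsC_{W}(t)+\phi_1^2 t\tsC_{W-2}(t)+\phi_2^2 t^2\tsC_{W-4}(t)$ gives $t^2\tsC_{W-2}(t)+t+1+\phi_1^2 t\tsC_{W-2}(t)+\phi_2^2(\tsC_{W-2}(t)-(t+1))$, exactly as stated.

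I do not anticipate a genuine obstacle: the content is purely the bookkeeping of shifting polynomial indices, and one can cross-check the result by matching the coefficient of each power $t^j$ against the corresponding diagonal entry of \eqref{eq:AR2W}. The only point deserving mild care is the range of validity, since \eqref{eq:cm} requires $k>0$ and the rearrangements tacitly assume $W$ large enough for $\tsC_{W-3}$ and $\tsC_{W-4}$ to be defined (i.e.\ $W\ge 4$); for smaller $W$ the stated identities persist as formal polynomial identities, but the few boundary cases may be verified directly.
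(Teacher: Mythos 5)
Your proposal is correct and follows exactly the paper's own route: it starts from the same preliminary diagonal expressions read off \eqref{eq:AR2W} and applies the recurrence \eqref{eq:cm} with $k=1$, $M=W-2,W-3$ and $k=2$, $M=W-2,W-4$ to rewrite everything in terms of $\tsC_{W-2}(t)$, which is precisely what the paper does (your remark on the range of validity $W\ge 4$ is a sensible extra check, not a departure).
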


\section{Studying existence of solutions to the problem of blind deconvolution for  the matrices proportional to inverses of covariance matrices in autoregressive models}

For given square matrices $\bfA$ and $\bfB$, we denote  $\tsa_j (t)= G_{\diag_j(\bfA)}(t)$ and $\tsb_j (t) = G_{\diag_j(\bfB)}(t)$.

\subsection{AR(1)}
\label{sec:ar1a}

\begin{theorem}\label{th:AR1_3_1}
Let $W\geq 2$ and  the matrix $\bfW$ be defined by \eqref{eq:AR1W}.
There exist symmetric matrices
$\bfA=(a_{i,j})_{i,j=0}^A$ and  $\bfB=(b_{i, j})_{i,j=0}^B$ with  $A,B \ge 1$
 such that $\bfW= \bfA \ast \bfB$, if and only if  $|\phi_1|= 1$.
\end{theorem}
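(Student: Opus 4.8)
The plan is to prove both implications separately: the \emph{if} direction by an explicit construction, and the \emph{only if} direction by reducing to banded matrices and then running a divisibility/root argument on the generating functions of diagonals. Throughout I would work with the gf relation $G_\bfW(t,s)=G_\bfA(t,s)G_\bfB(t,s)$. Substituting the explicit diagonal gfs of Lemma~\ref{lem:AR1w} into Lemma~\ref{lem:gf2} gives the convenient closed form $G_\bfW(t,s)=(t-\phi_1)(s-\phi_1)\tsC_{W-1}(ts)+(1-\phi_1^2)$, whose inhomogeneous term $1-\phi_1^2$ is exactly the quantity that must vanish.

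For the \emph{if} direction, assume $|\phi_1|=1$, so $\phi_1^2=1$ and the term $1-\phi_1^2$ disappears, leaving $G_\bfW(t,s)=(t-\phi_1)(s-\phi_1)\tsC_{W-1}(ts)$. I would then take $\bfA=\bigl(\begin{smallmatrix}1&-\phi_1\\-\phi_1&1\end{smallmatrix}\bigr)$, with diagonal gfs $\tsa_0(t)=1+t$ and $\tsa_1(t)=-\phi_1$, and $\bfB=\bfI_W$, the $W\times W$ identity, whose diagonal gf is $\tsb_0(t)=\tsC_{W-1}(t)$. Both are symmetric with $A=1\ge1$ and $B=W-1\ge1$ (using $W\ge2$). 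Corollary~\ref{prop:p_diag} gives $\tsw_i=\tsa_i\tsb_0$ for $i=0,1$, and these identities match Lemma~\ref{lem:AR1w} once $\phi_1^2=1$ (so that $\tsw_0=(1+t)\tsC_{W-1}$); hence $\bfW=\bfA\ast\bfB$.

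For the \emph{only if} direction, suppose $\bfW=\bfA\ast\bfB$ with $\bfA,\bfB$ symmetric and $A,B\ge1$. First I would establish bandedness. Let $p_1,p_2$ be the half-bandwidths of $\bfA,\bfB$ (the largest indices of their nonzero diagonals; both matrices are nonzero since $\bfW\ne\bfzero$). By Lemma~\ref{lem:ABCgf} and Remark~\ref{rem:diag_simplified}, the gf of $\diag_{p_1+p_2}(\bfW)$ is the single surviving term $\tsa_{p_1}(t)\tsb_{p_2}(t)$ (with zero power of $t$, as $p_1,p_2\ge0$), which is nonzero because $\tsa_{p_1},\tsb_{p_2}\ne0$ and $\spC[t]$ is an integral domain. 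Thus $\bfW$ has half-bandwidth $p_1+p_2$; since $\bfW$ is tridiagonal, $p_1+p_2=1$, so one matrix is diagonal and the other tridiagonal. By commutativity of convolution I may assume $\bfB$ is diagonal and $\bfA$ tridiagonal, whereupon Corollary~\ref{prop:p_diag} yields $\tsw_1=\tsa_1\tsb_0$ and $\tsw_0=\tsa_0\tsb_0$, so $\tsb_0$ divides both $\tsw_1$ and $\tsw_0$.

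The conclusion is then a short root count. Here $A+B=W$, $\deg\tsa_1\le A-1$, $\deg\tsb_0\le B$, while $\deg\tsw_1=W-1$ (this is where $\phi_1\ne0$ enters, since $\tsw_1=-\phi_1\tsC_{W-1}$). Matching degrees in $\tsw_1=\tsa_1\tsb_0$ forces $\deg\tsb_0=B\ge1$, so $\tsb_0$ has a root $\zeta\in\spC$. Because $\tsb_0\mid\tsw_1=-\phi_1\tsC_{W-1}$, that root satisfies $\tsC_{W-1}(\zeta)=0$; evaluating $\tsw_0(t)=(t+\phi_1^2)\tsC_{W-1}(t)+(1-\phi_1^2)$ from Lemma~\ref{lem:AR1w} gives $\tsw_0(\zeta)=1-\phi_1^2$, while $\tsb_0\mid\tsw_0$ forces $\tsw_0(\zeta)=0$. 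Hence $\phi_1^2=1$, i.e. $|\phi_1|=1$. The main obstacle is the first step of this direction --- showing that arbitrary symmetric $\bfA,\bfB$ are forced to be diagonal and tridiagonal, rather than assuming it; once bandedness is secured, the argument collapses to the single evaluation identity $\tsw_0(\zeta)=1-\phi_1^2$ above.
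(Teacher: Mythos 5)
Your proof is correct and takes essentially the same route as the paper: reduce by bandedness and symmetry to $\bfA$ tridiagonal and $\bfB$ diagonal, use $\tsw_1=\tsa_1\tsb_0$ and $\tsw_0=\tsa_0\tsb_0$, and evaluate $\tsw_0$ at a root of $\tsC_{W-1}$ dividing $\tsb_0$ to force $1-\phi_1^2=0$, with the converse given by a split of $\tsC_{W-1}$ (yours is the special case $\tsp(t)=1$, $\tsq(t)=\tsC_{W-1}(t)$ of the paper's family \eqref{eq:productAR1}--\eqref{eq:productAR1a}). Your explicit half-bandwidth argument and the degree count showing $\deg\tsb_0=B\ge 1$ simply fill in two steps the paper asserts without detail.
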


\begin{proof}
Assume that $\bfW= \bfA \ast \bfB$.
Since $\bfW$ is 3-diagonal, in view of  Corollary~\ref{prop:p_diag} and the assumption of symmetry of $\bfA$  and $\bfB$, we can only consider the case when $\bfA$ is 3-diagonal and $\bfB$ is diagonal.

From \eqref{eq:W_L_R3} we obtain
\begin{eqnarray}\label{eq:AR1wab}
  \tsw_1(t) =  \tsa_1 (t) \tsb_0 (t) ,\;\;
  \tsw_0(t) = \tsa_0(t) \tsb_0(t), \;\; \forall t \in \spC\,.
\end{eqnarray}
Since $\phi_1\neq 0$, from the left equalities in \eqref{eq:AR1w} and \eqref{eq:AR1wab} we deduce that
all the roots of $ \tsC_{W-1}(t)$ are the roots of $\tsa_1(t) \tsb_0 (t)$.
Since $B\ge 1$, at least one of the roots of $ \tsC_{W-1}(t)$  is a root of $\tsb_0 (t)$. Let~$t_1 \in \spC$ be such root.

Suppose that $|\phi_1|\neq 1$.
From the second equality  in  \eqref{eq:AR1wab} we have $\tsw_0(t_1)=0$
but the second equality in \eqref{eq:AR1w} yields $\tsw_0(t_1)=1-\phi_1^2\neq 0$. This contradiction
proves the necessity of $|\phi_1|= 1$.

Assume now  $|\phi_1|= 1$ so that $\phi_1=\pm 1$. In this case,  $\tsw_0(t) = (t+1) \tsC_{W-1}(t)$  and therefore, taking also into account the first equality in \eqref{eq:AR1w}, the equalities \eqref{eq:AR1wab} become
\begin{eqnarray} \label{eq:polyn}
  \tsa_1 (t) \tsb_0 (t) = -\phi_1 \tsC_{W-1}(t), \
  \tsa_0(t) \tsb_0(t) = (t+1) \tsC_{W-1}(t).
\end{eqnarray}
Represent $\tsC_{W-1}(t)$, which is a polynomial of degree  $W-1 \geq  1$,  as a product of two non-zero polynomials $\tsp(t)$  and $\tsq(t)$:
\be \label{eq:productAR1}
\tsC_{W-1}(t)=\tsp(t)\tsq(t)\, .
\ee
Here the polynomial
$\tsp(t)$ can be a constant and  $\tsq(t)$ has degree at least  1. Then we can choose
\be \label{eq:productAR1a}
\tsb_0 (t)= \tsq(t), \;\;\tsa_0(t)=(t+1)\tsp(t)\;\;{\rm  and} \;\;\tsa_1 (t)= -\phi_1 \tsp(t)\, .
\ee
\end{proof}

Assume $|\phi_1|=1$. Let us count the number of different deconvolutions $\bfW= \bfA \ast\bfB$ assuming $a_{0,0}=b_{0,0}=1$ (for any $\bfA $ and $\bfB$ and any $\lambda \neq 0$, we have  $\bfA \ast\bfB = \bfA^\prime \ast\bfB^\prime $ with $\bfA^\prime =\lambda \bfA$ and  $\bfB^\prime =\bfB/\lambda $). This is equivalent to counting the number of different splits \eqref{eq:productAR1} of $\tsC_{W-1}(t)=1+t+\ldots+t^{W-1}$
into a product of polynomials $\tsp(t)$ and  $\tsq(t)$ {with real coefficients}, where $\tsp(0)=\tsq(0)=1$, the degree of $\tsp(t)$ is arbitrary and  $\tsq(t)$ has degree at least  1.  Let us show that this number of splits is $2^M-1$, where $M=\lfloor W/2 \rfloor$.

Assume first that $W-1$ is even; that is, $W-1=2M$. In this case, all roots of $\tsC_{W-1}(t)$ are complex roots of unity and therefore  $\tsC_{W-1}(t)$ is a product of $M$ different quadratic forms which have no real roots. Hence, the total number of required splits  \eqref{eq:productAR1} is $2^M-1$.

Assume now that $W-1$ is odd; that is, $W=2M$. Then $\tsC_{W-1}(t)$ has one real root (which is $-1$) and $2M-2$ complex ones. Therefore  $\tsC_{W-1}(t)$ is a product of $t+1$ and $M-1$ quadratic forms with no real roots. Once again,  the total number of required splits  \eqref{eq:productAR1} is $2^M-1$.

Note that  if we additionally require, as we do in Section~\ref{sec:ABnonnegative}, that the polynomials $\tsp(t)$ and  $\tsq(t)$ in \eqref{eq:productAR1} have non-negative coefficients,
then the total number of eligible splits becomes $H(W)$, the number of different ordered factorizations of $W$ into primes, see \cite{Zhigljavsky.etal2016}.

\subsection{AR(2)}
Consider the AR(2) model \eqref{eq:AR2}  with regression coefficients $\phi_1$ and $\phi_2$.
We aim at identifying matrices $\bfA$ and $\bfB$ so that \eqref{eq:b_decomp} holds; that is, $\bfW= \bfA \ast\bfB$. Since $\bfW$ is 5-diagonal and in view of Corollary~\ref{prop:p1p2} this may only happen in the following  two cases: (a) each of $\bfA$ and $\bfB$ is 3-diagonal
and (b) $\bfA$ is 5-diagonal and $\bfB$ is diagonal.

\subsubsection{$\bfA$ and $\bfB$ are symmetric and both matrices are 3-diagonal}

\begin{theorem}
\label{th:AR2_3_3}
Let $W\ge 3$  and the matrix $\bfW $ be defined in \eqref{eq:AR2W} with $\phi_2 \neq 0$.
Assume that the  matrices
$\bfA=(a_{i,j})_{i,j=0}^A$ and  $\bfB=(b_{i, j})_{i,j=0}^B$ are 3-diagonal and   $A\ge 2$, $B\ge 1$.
There exist such matrices
 $\bfA$ and  $\bfB$ with  $\bfW= \bfA \ast \bfB$, if and only if either (i) $\phi_2 = -1$ and $|\phi_1|\geq 2$ or  (ii) $\phi_2 = 1$ and   $\phi_1=0$.
  \end{theorem}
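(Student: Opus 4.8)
The plan is to pass to the one-variable generating functions of the diagonals, convert $\bfW=\bfA\ast\bfB$ into a small polynomial system, and then decide when that system admits a \emph{real} solution of the prescribed tridiagonal shape. Under the running hypothesis that $\bfA,\bfB$ are symmetric and $3$-diagonal, formula \eqref{eq:W_L_R3} of Lemma~\ref{lem:ABCgf} (together with $\tsa_{-1}=\tsa_1$, $\tsb_{-1}=\tsb_1$) gives the diagonals of $\bfC=\bfA\ast\bfB$ as
\[
\tsc_2=\tsa_1\tsb_1,\qquad \tsc_1=\tsa_0\tsb_1+\tsa_1\tsb_0,\qquad \tsc_0=\tsa_0\tsb_0+2t\,\tsa_1\tsb_1 .
\]
Matching these with $\tsw_2,\tsw_1,\tsw_0$ from Lemma~\ref{lem:AR2w} and using $\phi_2\neq0$ (so $\deg\tsC_{W-2}=W-2$) yields
\[
\tsa_1\tsb_1=-\phi_2\tsC_{W-2},\qquad \tsa_0\tsb_1+\tsa_1\tsb_0=\tsw_1,\qquad \tsa_0\tsb_0=\tsw_0-2t\,\tsw_2,
\]
with the bookkeeping $A+B=W$, $\deg\tsa_1=A-1$, $\deg\tsa_0=A$ (and similarly for $\bfB$).

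The key structural step is to reassemble this system into one bivariate equation. Plugging Lemma~\ref{lem:AR2w} into the symmetric expansion $G_\bfW(t,s)=\tsw_0(ts)+(t+s)\tsw_1(ts)+(t^2+s^2)\tsw_2(ts)$ of Lemma~\ref{lem:gf2} and collecting terms, a direct coefficient comparison gives
\[
G_\bfW(t,s)=\tsC_{W-2}(ts)\,\chi(t)\,\chi(s)+R(t,s),
\]
where $\chi(x)=x^2-\phi_1x-\phi_2$ is the AR(2) characteristic polynomial and $R(t,s)=(1+\phi_2)\bigl[(1-\phi_2)(ts+1)-\phi_1(t+s)\bigr]$. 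The appearance of $\chi(t)\chi(s)$ is the one genuinely nontrivial identity; I would verify it by checking that the $\tsC_{W-2}$-part of $G_\bfW$ has, coefficient by coefficient in $1,\,t+s,\,t^2+s^2$, exactly the symmetric-function expansion of $(t^2-\phi_1t-\phi_2)(s^2-\phi_1s-\phi_2)$.

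For necessity I would treat $G_\bfW$ as a quadratic in $\sigma=t+s$ with coefficients in $\tau=ts$, namely $G_\bfW=\tsw_2\sigma^2+\tsw_1\sigma+(\tsw_0-2\tau\tsw_2)$. A deconvolution with symmetric $3$-diagonal factors is precisely a factorization of this quadratic into two $\sigma$-linear factors $\tsa_1\sigma+\tsa_0$, $\tsb_1\sigma+\tsb_0$ with coefficients in $\spR[\tau]$, and such a real polynomial factorization forces the $\sigma$-discriminant $\tsw_1^2-4\tsw_2(\tsw_0-2\tau\tsw_2)$ to be a perfect square in $\spR[\tau]$. Using the identity above this discriminant equals
\[
(\phi_1^2+4\phi_2)(\tau+\phi_2)^2\,\tsC_{W-2}(\tau)^2+\beta(\tau)\,\tsC_{W-2}(\tau)+\phi_1^2(1+\phi_2)^2,
\]
with $\beta$ an explicit polynomial of degree one. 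Since $\tsC_{W-2}$ has only simple roots and $W\ge3$, I would write $\sqrt{\mathrm{disc}}=\sqrt{\phi_1^2+4\phi_2}\,(\tau+\phi_2)\tsC_{W-2}+E$, argue that $E$ must be constant, and then match polynomials to obtain $E^2=\phi_1^2(1+\phi_2)^2$ and $\beta=2\sqrt{\phi_1^2+4\phi_2}\,(\tau+\phi_2)\,E$. Solving these together with the evident necessary condition $\phi_1^2+4\phi_2\ge0$ (real characteristic roots, forced by the even-degree leading coefficient) is what pins down the admissible $(\phi_1,\phi_2)$ and should reproduce the two regimes of the statement — notably $\phi_2=-1$, where $R\equiv0$ and the leading coefficient $\phi_1^2-4$ must be nonnegative, i.e. $|\phi_1|\ge2$, and the point $\phi_2=1,\ \phi_1=0$.

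For sufficiency I would run the discriminant computation backwards: when $\phi_1^2+4\phi_2\ge0$ the quadratic $\chi(t)\chi(s)$ splits over $\spR[\tau]$ into two $\sigma$-linear factors whose $\sigma$-roots $\tfrac{\phi_1(\phi_2-\tau)\pm\sqrt{\phi_1^2+4\phi_2}\,(\tau+\phi_2)}{-2\phi_2}$ are honest real polynomials in $\tau$; choosing any real factorization $\tsC_{W-2}=\tsp\,\tsq$ and distributing $\tsp,\tsq$ between the two factors produces explicit $\tsa_0,\tsa_1,\tsb_0,\tsb_1$, hence symmetric $3$-diagonal $\bfA,\bfB$, and the freedom to take $\deg\tsp\in[1,W-2]$ secures $A\ge2,\ B\ge1$. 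In the cases of the statement $R\equiv0$, so this product is already all of $G_\bfW$ and one explicit split finishes the proof. I expect the real difficulty to lie entirely in the necessity step: turning ``a real symmetric tridiagonal factorization exists'' into the perfect-square condition and then extracting the \emph{exact} admissible set — matching it to the two stated regimes rather than stopping at the easy necessary condition $\phi_1^2+4\phi_2\ge0$ — requires careful control of the remainder $R$ and of the degenerate coincidences at the roots of $\tsC_{W-2}$, where several of $\tsa_0,\tsa_1,\tsb_0,\tsb_1$ may vanish simultaneously.
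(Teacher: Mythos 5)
Your reduction is the same as the paper's: the three diagonal-gf equations you write are exactly \eqref{eq:2AR}, and your observation that a symmetric tridiagonal deconvolution is a factorization of the quadratic $\tsw_2\sigma^2+\tsw_1\sigma+(\tsw_0-2\tau\tsw_2)$ in $\sigma=t+s$ over $\spR[\tau]$, forcing the discriminant $\tsw_1^2-4\tsw_2(\tsw_0-2t\tsw_2)$ to be a polynomial square, is the paper's Lemma~\ref{lem:th33_1} (derived there via Vieta for $\tsy=\tsa_0\tsb_1$, $\tsz=\tsa_1\tsb_0$). Your bivariate identity $G_\bfW(t,s)=\tsC_{W-2}(ts)\,\chi(t)\chi(s)+R(t,s)$ with $R=(1+\phi_2)\bigl[(1-\phi_2)(ts+1)-\phi_1(t+s)\bigr]$ is correct (I verified the coefficient comparison) and is a tidy repackaging of Lemma~\ref{lem:AR2w} that the paper does not state; your conditions ``$E$ constant, $E^2=\phi_1^2(1+\phi_2)^2$, $\beta=2\sqrt{\phi_1^2+4\phi_2}\,(\tau+\phi_2)E$'' are equivalent to the paper's $\tsv_1^2-4\tsv_0\tsv_2=0$ in Lemma~\ref{lem:th33_2}, and your sufficiency construction (split $\tsC_{W-2}=\tsp\tsq$ and distribute the $\sigma$-linear factors) is essentially Corollary~\ref{cor:AR1_3_1ab3} and \eqref{eq:split_again}.

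The genuine gap is in necessity, exactly where you hope the discriminant conditions will ``pin down'' the admissible set: they cannot. Those conditions carve out \emph{three} one-parameter families --- $\phi_2=-1$ with $|\phi_1|\ge 2$, and the \emph{entire} lines $\phi_2=1-\phi_1$ and $\phi_2=1+\phi_1$. On the first line one has $\phi_1^2+4\phi_2=(\phi_1-2)^2\ge 0$ for every $\phi_1$, and the discriminant is a perfect square for every $\phi_1$ (e.g.\ at $\phi_1=3$, $\phi_2=-2$ it equals $((\tau-2)\tsC_{W-2}(\tau)+3)^2$). So your plan terminates with the lines still alive, while the theorem retains only two points of them. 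The paper eliminates them by a second, independent stage (cases (B) and (C) of its proof): fix the assignment of the two quadratic roots to $\tsa_0\tsb_1$ and $\tsa_1\tsb_0$, deduce $\tsb_0(t)=-(1+t)\tsb_1(t)$ from the first equation of \eqref{eq:aaa}, then evaluate the middle equation at a root of $\tsC_{W-2}$ belonging to the relevant factor to force $\phi_1(2\mp\phi_1)=0$. Your proposal offers no substitute for this step, and it is not a formality: the substitution only bites if the relevant factor ($\tsa_1$ or $\tsb_1$) actually has a root. Indeed, when $\tsb_1$ is constant the elimination fails outright: on the whole line $\phi_2=1-\phi_1$ the choice $\tsb_1=1$, $\tsb_0=-(1+\tau)$, $\tsa_1=-\phi_2\tsC_{W-2}$, $\tsa_0=-\bigl(\tau+(1-\phi_1)^2\bigr)\tsC_{W-2}-\phi_1(2-\phi_1)$ satisfies all three equations of \eqref{eq:2AR} identically in $\tau$ (for $W=3$, $\phi_1=3$, $\phi_2=-2$ this gives an explicit $3\times 3$ tridiagonal $\bfA$ and $2\times 2$ matrix $\bfB$ with $\bfA\ast\bfB=\bfW$, entrywise). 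So the ``degenerate coincidences at the roots of $\tsC_{W-2}$'' that you defer are precisely the crux: a complete necessity argument must pin the degrees ($\deg\tsa_0=A$, $\deg\tsb_0=B$, $\deg\tsa_1=A-1$, $\deg\tsb_1=B-1$, forced by $A+B=W$ and the product equations) so that $\tsb_1$ is nonconstant for $B\ge 2$, and must treat the boundary case $B=1$ separately --- none of which your sketch supplies, and which is where all the real work beyond your (correct) discriminant computation lies.
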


The proof of Theorem~\ref{th:AR2_3_3} is based on several lemmas.

\begin{lemma}
\label{lem:th33_1}
Under the conditions of Theorem~\ref{th:AR2_3_3}, the existence of matrices $\bfA$ and $ \bfB$ such that $\bfW= \bfA \ast \bfB$, implies that
the polynomial $\tsw_1^2(t) - 4\tsw_2(t) (\tsw_0(t) - 2t \tsw_2(t))$ is the square of a polynomial in $t$. \end{lemma}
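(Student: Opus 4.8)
The plan is to read off the three nonnegative diagonals of $\bfC=\bfA\ast\bfB$ directly from \eqref{eq:W_L_R3} and then form the indicated combination. Since $\bfA$ and $\bfB$ are assumed symmetric and $3$-diagonal, the only nonzero diagonal generating functions are $\tsa_0,\tsa_{\pm1}$ and $\tsb_0,\tsb_{\pm1}$, with $\tsa_{-1}=\tsa_1$ and $\tsb_{-1}=\tsb_1$ by symmetry. Applying \eqref{eq:W_L_R3} for $i=0,1,2$, with the $t$-weights evaluated via Remark~\ref{rem:diag_simplified} (a factor $t$ appearing exactly for the pairing of $j$ and $k$ of opposite signs with $|j|=|k|=1$), I would obtain the three identities
\begin{eqnarray*}
  \tsw_2(t) &=& \tsa_1(t)\tsb_1(t),\\
  \tsw_1(t) &=& \tsa_0(t)\tsb_1(t)+\tsa_1(t)\tsb_0(t),\\
  \tsw_0(t) &=& \tsa_0(t)\tsb_0(t)+2t\,\tsa_1(t)\tsb_1(t).
\end{eqnarray*}

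From the first and third of these, $\tsw_0(t)-2t\,\tsw_2(t)=\tsa_0(t)\tsb_0(t)$. Substituting this together with $\tsw_2=\tsa_1\tsb_1$ and $\tsw_1=\tsa_0\tsb_1+\tsa_1\tsb_0$ into the target expression, I would get
\begin{eqnarray*}
  \tsw_1^2(t)-4\tsw_2(t)\bigl(\tsw_0(t)-2t\,\tsw_2(t)\bigr)
  &=&(\tsa_0\tsb_1+\tsa_1\tsb_0)^2-4(\tsa_1\tsb_1)(\tsa_0\tsb_0)\\
  &=&(\tsa_0\tsb_1-\tsa_1\tsb_0)^2,
\end{eqnarray*}
which is manifestly the square of the polynomial $\tsa_0(t)\tsb_1(t)-\tsa_1(t)\tsb_0(t)$. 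This is precisely the discriminant-type statement of the lemma, so the argument would be complete.

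The computation is elementary once the three diagonal identities are established, so the only real care needed is the bookkeeping of the $t$-weights in \eqref{eq:W_L_R3}. In particular, the coefficient $2t$ in $\tsw_0$ (rather than $t$ or $t^2$) is exactly what makes the combination collapse to a perfect square, and correctly deriving this factor from the weight $(|j|+|k|-|j+k|)/2$ evaluated at the two pairs $(1,-1)$ and $(-1,1)$ is the single place where a slip would propagate. I expect no further difficulty, since the identity is a purely formal consequence of the convolution structure under symmetry and does not invoke the explicit AR(2) expressions for $\tsw_0,\tsw_1,\tsw_2$ from Lemma~\ref{lem:AR2w}; those will only be needed in the subsequent lemmas that exploit this square condition to constrain $\phi_1,\phi_2$.
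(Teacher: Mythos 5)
Your proof is correct and takes essentially the same approach as the paper: both derive the three diagonal identities $\tsw_2=\tsa_1\tsb_1$, $\tsw_1=\tsa_0\tsb_1+\tsa_1\tsb_0$, $\tsw_0=\tsa_0\tsb_0+2t\,\tsa_1\tsb_1$ from \eqref{eq:W_L_R3} and conclude that $\tsw_1^2-4\tsw_2(\tsw_0-2t\tsw_2)$ is a polynomial square. The only cosmetic difference is that the paper argues via Vieta's formulas, viewing $\tsa_0\tsb_1$ and $\tsa_1\tsb_0$ as the roots of a quadratic that must be polynomials, whereas you expand $(\tsy+\tsz)^2-4\tsy\tsz=(\tsy-\tsz)^2$ directly, which moreover exhibits the square root explicitly as $\tsa_0\tsb_1-\tsa_1\tsb_0$.
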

\begin{proof}

From \eqref{eq:W_L_R3} we obtain
\be
\label{eq:2AR}
%\begin{aligned}
  \tsw_2(t) =\tsa_1(t) \tsb_1(t), \
  \tsw_1(t) =\tsa_0(t) \tsb_1(t) + \tsa_1(t) \tsb_0(t), \
  \tsw_0(t) =\tsa_0(t) \tsb_0(t) + 2t \tsa_1(t) \tsb_1(t).
%\end{aligned}
\ee
For brevity, we will omit the  argument $t$.
We have:
\begin{eqnarray*}
  \tsa_1 \tsb_1 = \tsw_2, \ \tsa_0 \tsb_1 + \tsa_1 \tsb_0 = \tsw_1, \  \tsa_0 \tsb_0  = \tsw_0 - 2t \tsw_2.
\end{eqnarray*}
Denote $\tsy = \tsa_0 \tsb_1$, $\tsz = \tsa_1 \tsb_0$. Then we obtain
\begin{eqnarray*}
  \tsy+\tsz = \tsw_1, \ \tsy\tsz  =  \tsw_2(\tsw_0 - 2t \tsw_2).
\end{eqnarray*}
This means that $\tsy$ and $\tsz$ are the roots of the quadratic equation
$\tsx^2 + \tsw_1 \tsx + \tsw_2(\tsw_0 - 2t \tsw_2)  = 0$. These roots are
\begin{eqnarray}\label{eq:x}
  \tsx_{1,2} = \frac{\tsw_1\pm \sqrt{\tsw_1^2 -4\tsw_2 (\tsw_0 - 2t \tsw_2) }}{2}.
\end{eqnarray}
Since the roots should be polynomials, $\tsw_1^2 - 4\tsw_2(\tsw_0 - 2t \tsw_2) $ should be a square
of a polynomial in $t$ (recall that $\tsw_i$ are polynomials in $t$).
\end{proof}

\begin{lemma}\label{lem:Pol_in_C}
Assume that a polynomial $\tsP_{2M}(t)$ has the form $$\tsP_{2M}(t) = (\tsP_1(t) \tsP_{M-1}(t))^2 + \tsQ_1(t) \tsP_{M-1}(t) + \tsP_{0}$$ for some {$M \ge 1$} (the lower indices mean polynomial degrees).
If the square root of $\tsP_{2M}(t)$ exists then it has the form $\pm\tsP_1(t) \tsP_{M-1}(t) + {\rm const}$.
\end{lemma}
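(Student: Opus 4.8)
The plan is to isolate the single dominant summand $g(t) := \tsP_1(t)\,\tsP_{M-1}(t)$ and to compare it with the hypothetical square root by a pure degree count. Since $\deg \tsP_1 = 1$ and $\deg \tsP_{M-1} = M-1$, the polynomial $g$ has degree exactly $M$ with nonzero leading coefficient, and the remaining part $\tsQ_1 \tsP_{M-1} + \tsP_0$ has degree at most $M$ (the term $\tsQ_1 \tsP_{M-1}$ contributing degree $M$, the constant $\tsP_0$ degree $0$). Because $M \ge 1$ forces $2M > M$, the leading term of $\tsP_{2M} = g^2 + \tsQ_1 \tsP_{M-1} + \tsP_0$ is supplied entirely by $g^2$; in particular $\tsP_{2M}$ genuinely has degree $2M$, with the same leading coefficient as $g^2$. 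This is the observation that makes a coefficient comparison meaningful.

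Next I would suppose $\tsP_{2M} = R^2$ for some polynomial $R$. Comparing leading terms forces $\deg R = M$ and the leading coefficient of $R$ to equal $\pm$ that of $g$. I would then factor the difference of squares,
\[
  R^2 - g^2 = (R - g)(R + g) = \tsQ_1 \tsP_{M-1} + \tsP_0,
\]
whose right-hand side has degree at most $M$. If the leading coefficients of $R$ and $g$ agree, then $R + g$ has degree exactly $M$ while $R - g$ has degree strictly below $M$; since the product has degree at most $M$, the factor $R - g$ must be a constant, giving $R = g + \mathrm{const}$. If the leading coefficients are opposite, the symmetric argument shows $R + g$ is a constant, so $R = -g + \mathrm{const}$. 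Either way $R = \pm\, \tsP_1(t)\,\tsP_{M-1}(t) + \mathrm{const}$, which is exactly the claimed form.

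The argument is essentially degree bookkeeping together with a difference-of-squares factorization, so I do not anticipate a genuine obstacle. The two points requiring care are, first, to confirm that $g^2$ truly dominates the correction terms, i.e. that $2M > M$ — this is precisely where the hypothesis $M \ge 1$ is used and why $\tsP_0$ is harmless; and second, to treat both sign choices for the leading coefficient symmetrically rather than silently assuming they agree. Implicitly one also works over a field of characteristic $\ne 2$ (here $\spR$), so that the leading coefficient of $R+g$ or $R-g$ in the dominant case is nonzero; this is automatic in the present setting.
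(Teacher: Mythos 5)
Your proof is correct and takes essentially the same route as the paper's: a difference-of-squares factorization of $R^2-g^2$ with $g=\tsP_1\tsP_{M-1}$, settled by degree counting on the two factors, treating both signs of the leading coefficient. The only (cosmetic) difference is that the paper first completes the square, replacing $g$ by $\tsP_1\tsP_{M-1}+\gamma$ with $\gamma=d/(2a)$ so that the remainder has degree at most $M-1$ and the small factor is forced to vanish identically, whereas you tolerate a degree-$M$ remainder and conclude the small factor is a constant --- both arguments yield the claimed form $\pm\tsP_1(t)\tsP_{M-1}(t)+\mathrm{const}$.
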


\begin{proof}
Let $\tsP_1(t) = at+b$, $\tsQ_1(t) = dt + f$.
Then for  $\gamma  = d/(2a)$ we have
\begin{equation*}
  \tsP_{2M}(t) = (\tsP_1(t) \tsP_{M-1}(t) + \gamma)^2 + g \tsP_{M-1}(t) + p_0,
\end{equation*}
where $g$ and $p_0$ are some constants.
Let $\tsP_{2M}(t) = \tsU_M^2(t)$. Denote $\tsV_M(t) = \tsP_1(t) \tsP_{M-1}(t) + \gamma$, $\tsD_{M-1}(t) = g \tsP_{M-1}(t) + p_0$.
Then $$\tsD_{M-1}(t) = \tsU_M^2(t) - \tsV_M^2(t) = (\tsU_M(t) - \tsV_M(t))(\tsU_M(t) + \tsV_M(t)).$$
The left-hand side part in this equation is a polynomial of order $M-1$ or less but the right-hand side  part is either identical zero or a polynomial of
order  $M$ or larger. Therefore  this equality can be valid only if $\tsU_M(t) =\pm \tsV_M(t)$.
\end{proof}

\begin{lemma}
\label{lem:th33_2} For the matrix $\bfW $ defined in \eqref{eq:AR2W} with $\phi_2 \neq 0$, the polynomial $\tsw_1^2(t) - 4\tsw_2(t) (\tsw_0(t) - 2t \tsw_2(t))$ is a  polynomial square if and only if at least one of the following relations hold:\\
(A) $\phi_2=-1$, (B) $\phi_2 = 1-\phi_1$,  (C) $\phi_2=1+\phi_1$.
\end{lemma}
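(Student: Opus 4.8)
The plan is to compute the polynomial $\Delta(t)\eqdef\tsw_1^2(t)-4\tsw_2(t)(\tsw_0(t)-2t\tsw_2(t))$ explicitly in terms of $\tsC_{W-2}(t)$ using the formulas of Lemma~\ref{lem:AR2w}, and then determine exactly when this polynomial is a perfect square. To keep the algebra manageable I would set $\tsC\eqdef\tsC_{W-2}(t)$ and rewrite the three generating functions as
\begin{equation*}
  \tsw_2 = -\phi_2\tsC,\quad
  \tsw_1 = -\phi_1(t\tsC+1)+\phi_1\phi_2(\tsC-1),\quad
  \tsw_0 = (t^2+\phi_1^2 t)\tsC + (t+1) + \phi_2^2(\tsC-t-1).
\end{equation*}
Since each $\tsw_i$ is a polynomial of low degree in the single ``big'' polynomial $\tsC$ (degrees $1$, $1$, $1$ respectively, with coefficients that are low-degree polynomials in $t$), the combination $\Delta$ is a quadratic polynomial in $\tsC$ whose coefficients are explicit polynomials in $t$ depending on $\phi_1,\phi_2$. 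My first step is therefore to expand $\Delta$ and collect it as $\Delta = \alpha(t)\,\tsC^2 + \beta(t)\,\tsC + \gamma(t)$.

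Next I would recognize this expansion as an instance of Lemma~\ref{lem:Pol_in_C}: writing $\tsP_1(t)\tsP_{M-1}(t)$ for the ``$\sqrt{\alpha}\,\tsC$'' piece, the lemma tells me that a square root, when it exists, must be of the form $\pm(\text{linear in }t)\cdot\tsC+\text{const}$. Concretely, the leading coefficient $\alpha(t)$ of $\tsC^2$ must itself be a perfect square of a polynomial in $t$ (so that the square root can have the stated shape), and then the cross term $\beta(t)$ must be consistent with a constant remainder. The key computation is that $\alpha(t)=\tsw_1$'s $\tsC$-coefficient squared minus $4\tsw_2$'s part, and I expect $\alpha(t)$ to factor as something like $(\phi_1 t-\phi_1\phi_2)^2+4\phi_2(t^2+\phi_1^2 t-\phi_2)$ up to sign; simplifying this and demanding it be a square in $t$ is where the three conditions (A), (B), (C) should emerge. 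I would match the condition ``$\alpha(t)$ is a perfect square of a quadratic/linear polynomial'' against the discriminant of $\alpha$ viewed as a quadratic in $t$, and show this discriminant vanishes precisely when $\phi_2=-1$, $\phi_2=1-\phi_1$, or $\phi_2=1+\phi_1$.

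The main obstacle I anticipate is the bookkeeping: because $\tsC_{W-2}$ depends on $W$, I must be careful that the perfect-square condition is required to hold \emph{as a polynomial identity in $t$ for all admissible $W$}, not just numerically, so I cannot exploit special roots of $\tsC$. This is exactly why Lemma~\ref{lem:Pol_in_C} is the right tool: it reduces the ``square of a polynomial'' question about the whole $\Delta$ to a finite set of coefficient conditions in $\phi_1,\phi_2$ that are independent of $W$ (for $W$ large enough that $\tsC$ has positive degree, i.e.\ $W\ge 3$ as assumed). For the converse direction I would substitute each of (A), (B), (C) back into $\alpha,\beta,\gamma$ and exhibit the explicit square root $\pm\tsP_1(t)\tsC+\mathrm{const}$, verifying $\Delta=(\pm\tsP_1\tsC+\mathrm{const})^2$ directly; the case $\phi_2=-1$ should collapse $\tsw_2$ and simplify $\alpha$ to a clean square, while $\phi_2=1\mp\phi_1$ should make the quadratic $\alpha(t)$ a perfect square by forcing its discriminant to zero. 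I expect the factorizations to be routine once $\alpha(t)$ is written in the factored form above, so the crux is the initial correct expansion of $\Delta$ and the clean identification of $\alpha(t)$.
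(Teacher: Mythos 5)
Your setup coincides with the paper's proof up to a point: the paper also writes each $\tsw_i$ as a degree-one polynomial in $\tsC=\tsC_{W-2}(t)$, expands $\Delta(t)=\tsv_2(t)\tsC^2+\tsv_1(t)\tsC+\tsv_0$, and invokes Lemma~\ref{lem:Pol_in_C} to force any square root to have the form $\pm\tsP_1(t)\,\tsC+\mathrm{const}$. But the step you single out as the crux is wrong. The coefficient of $\tsC^2$ is
\[
\tsv_2(t)=\phi_1^2(\phi_2-t)^2+4\phi_2\bigl(t^2+(\phi_1^2+2\phi_2)t+\phi_2^2\bigr)=(\phi_1^2+4\phi_2)(t+\phi_2)^2,
\]
so its discriminant as a quadratic in $t$ vanishes identically, for \emph{every} pair $(\phi_1,\phi_2)$: over $\spC$ it is always a perfect square, and over $\spR$ the only obstruction is the inequality $\phi_1^2+4\phi_2\ge 0$, which has nothing to do with (A), (B), (C). Demanding that $\tsv_2(t)$ be a perfect square therefore produces no conditions, and your planned necessity argument yields nothing at this point.

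The conditions actually come from the discriminant of $\Delta$ as a quadratic in $\tsC$, i.e.\ from the coefficient matching you only gesture at with ``the cross term must be consistent with a constant remainder.'' Once Lemma~\ref{lem:Pol_in_C} forces $\Delta=(\tsP_1(t)\tsC+c)^2$, comparing coefficients gives $\tsv_1=2c\,\tsP_1$ and $\tsv_0=c^2$, hence $\tsv_1^2(t)-4\tsv_0\tsv_2(t)\equiv 0$ as a polynomial identity in $t$. The paper computes (with $\tsv_0=\phi_1^2(1+\phi_2)^2$ constant and $\tsv_1$ linear in $t$) that, up to a positive constant factor,
\[
\tsv_1^2-4\tsv_0\tsv_2=(1+\phi_2)^2\,\phi_2\,(1+\phi_1-\phi_2)(1-\phi_1-\phi_2)\bigl((t+1)^2\phi_2+t\phi_1^2\bigr);
\]
since $\phi_2\neq 0$, the last factor is a nonzero polynomial in $t$, so one of the $t$-free factors must vanish, giving exactly (A), (B), (C). Your sufficiency plan (substitute each case and exhibit $\pm\tsP_1\tsC+\mathrm{const}$ explicitly) is sound and would also repair necessity once redirected to this discriminant; note only that in case (A) one has $\tsv_2=(\phi_1^2-4)(t-1)^2$, so the square root is a \emph{real} polynomial only when $|\phi_1|\ge 2$ --- a subtlety the paper defers to Theorem~\ref{th:AR2_3_3}. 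Finally, your worry about quantifying over $W$ is unnecessary: Lemma~\ref{lem:Pol_in_C} applies for each fixed $W$ with $\deg\tsC\ge 1$, so the coefficient conditions are extracted from the single given matrix rather than by varying $W$.
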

\begin{proof}

For brevity, we will omit the polynomial argument $t$. Denote $\tsC = \tsC_{W-2}(t)$.

Direct substitution using \eqref{eq:AR2w} gives
\begin{eqnarray*}
  \tsw_1^2-4(\tsw_0 - 2t \tsw_2) \tsw_2 =
  %\\ = (-\phi_1 (t \tsC+1) +\phi_1 \phi_2(\tsC-1))^2 + 4\phi_2 \tsC(t^2 \tsC + t + 1 + (\phi_1^2+2\phi_2) t \tsC + %\phi_2^2 (\tsC-(t+1)))=\\=
  (-\phi_1 t+\phi_1\phi_2)\tsC -\phi_1(1+\phi_2))^2 + \\
  +4\phi_2(t^2 + (\phi_1^2 + 2\phi_2)t + \phi_2^2) \tsC^2 +
  4\tsC\phi_2(t+1-\phi_2^2(t+1)).
\end{eqnarray*}
Consider this polynomial as a polynomial in $\tsC$. By $\tsv_i$ we denote the coefficient
for $\tsC^i$, $i=0,1,2$.
We have:
\begin{eqnarray*}
  \tsv_0&=&\phi_1^2(1+\phi_2)^2,\\
  \tsv_1&=&2(1+\phi_2)(-\phi_1^2(\phi_2-t) + 2\phi_2(t+1)(1-\phi_2)),\\
  \tsv_2&=&(\phi_1^2+4\phi_2)(t+\phi_2)^2.
\end{eqnarray*}
In view of Lemma~\ref{lem:Pol_in_C}, the determinant
\begin{eqnarray*}
  \tsv_1 - 4 \tsv_0 \tsv_2 = 4(1+\phi_2)^2\phi_2(1+\phi_1-\phi_2)(1-\phi_1-\phi_2)((t+1)^2 \phi_2 + t\phi_1^2)
\end{eqnarray*}
should be equal to zero.
The solutions of the equation $\tsv_1 - 4 \tsv_0 \tsv_2 =0$, with respect to $\phi_2$, are: $\phi_2=0,-1, 1-\phi_1, 1+\phi_1,  -t\phi_1^2/(1+t)^2$.
The root $\phi_2=0$ is inappropriate, since $\phi_2\neq0$ by the definition of AR(2). The root
$\phi_2=-t\phi_1^2/(1+t)^2$ is inappropriate as it depends on $t$.
The proof is complete.
\end{proof}

\begin{proof}[Proof of Theorem~\ref{th:AR2_3_3}]

In view of Lemmas~\ref{lem:th33_1} and \ref{lem:th33_2}, matrices $\bfA$ and $\bfB$ such that
$\bfW= \bfA \ast \bfB$ holds may exist only in the three particular cases indicated in Lemma~\ref{lem:th33_2}; that is,
(A) $\phi_2=-1$, (B) $\phi_2 = 1-\phi_1$,  (C) $\phi_2=1+\phi_1$. Let us consider these three cases separately.

(A) Assume $\phi_2 = -1$. Then
\begin{eqnarray*}
  \tsw_2(t) =  \tsC_{W-2}(t),\
  \tsw_1(t) = -\phi_1 \tsC_{W-2}(t)(t+1), \
  \tsw_0(t) = \tsC_{W-2}(t)(t^2 + \phi_1^2 t + 1).
\end{eqnarray*}
From \eqref{eq:AR2w} and \eqref{eq:2AR} we then have:
\begin{equation}\label{eq:three_eq}
  \begin{aligned}
  \tsa_1(t) \tsb_1(t) &= \tsC_{W-2}(t), \\
  \tsa_0(t) \tsb_1(t) + \tsa_1(t) \tsb_0(t) &= -\phi_1 \tsC_{W-2}(t)(t+1), \\
  \tsa_0(t) \tsb_0(t) + 2t \tsa_1(t) \tsb_1(t) &=  \tsC_{W-2}(t)(t^2 + \phi_1^2 t + 1).
  \end{aligned}
\end{equation}
Since polynomials $\tsa_0(t)$ and $\tsb_0(t)$ have one degree higher than the polynomials $\tsa_1(t)$ and $\tsb_1(t)$ respectively,
 $\tsa_1(t) \tsb_1(t) = \tsC_{W-2}(t)$ and $\tsa_0(t) \tsb_0(t)$ contains $\tsC_{W-2}(t)$ as a multiplier,  we obtain that $\tsa_0(t)=(\lambda_0+\lambda_1t)\tsa_1(t)$ and $\tsb_0(t)=(\mu_0+\mu_1t)\tsb_1(t)$  for some $\lambda_0,\lambda_1,\mu_0$ and $\mu_1$.
Substituting this into equations \eqref{eq:three_eq} and cancelling $C_{W-2}(t)$, which is a common multiplier in all equations, we obtain the following two equations for $\lambda_0,\lambda_1,\mu_0$ and $\mu_1$:
\begin{eqnarray*}
  \left\{
  \begin{array}{lcl}
    (\mu_0+\mu_1t)\,(\lambda_0+\lambda_1t) &=& t^2-2t+\phi_1^2t +1 \\
    (\mu_0+\mu_1t)+(\lambda_0+\lambda_1t) &=& -(t+1)\phi_1
  \end{array}
  \right.
\end{eqnarray*}
Equating the coefficients of the two polynomials in $t$ we find that
there are no solutions  for $\lambda_0,\lambda_1,\mu_0$ and $\mu_1$ when $|\phi_1|<2$. On the other hand, there are the following solutions when $|\phi_1|\geq 2$: let
\be
\label{eq:z}
z_{1}= \frac{ -\phi_1 + \sqrt{\phi_1^2-4}}{2}\, \;\; z_{2}= \frac{ -\phi_1 - \sqrt{\phi_1^2-4}}{2}
\ee
be two solutions of the equation $z^2+\phi_1 z+1=0$, then we can choose either $
\mu_0 = \lambda_1= z_1,\;\;\lambda_0 = \mu_1 = z_2
$
or
$
\mu_0 = \lambda_1= z_2,\;\;\lambda_0 = \mu_1 = z_1.
$
This gives the required expressions for $\tsa_0(t)$, $\tsa_1(t)$, $\tsb_0(t)$ and $\tsb_1(t)$, see Corollary~\ref{cor:AR1_3_1ab3}.

(B) Assume $\phi_2 = 1 - \phi_1$. Then the equations \eqref{eq:AR2w} become
\begin{equation}
\label{eq:aaa}
  \begin{aligned}
  \tsa_1(t) \tsb_1(t) &= -(1-\phi_1) \tsC_{W-2}(t),\\
  \tsa_0(t) \tsb_1(t) + \tsa_1(t) \tsb_0(t)  &= -\phi_1 (t \tsC_{W-2}(t)+1) +\phi_1 (1-\phi_1)(\tsC_{W-2}(t)-1),\\
 \!\!\!\!\!\! \tsa_0(t) \tsb_0(t)\! + \!2t \tsa_1(t) \tsb_1(t) &= t^2 \tsC_{W-2}(t) \!+\! t\! +\! 1\! +\! \phi_1^2 t \tsC_{W-2}(t) \!+\! (1\!-\!\phi_1)^2 (\tsC_{W-2}(t)\!-\!(t\!+\!1)).
\end{aligned}
\end{equation}
{Using the expression \eqref{eq:x} for the products $\tsa_0(t) \tsb_1(t)$ and $\tsa_1(t) \tsb_0(t)$, we obtain $\tsa_1 (t)\tsb_0(t)= (1-\phi_1)(t+1)\tsC_{W-2}(t)$ as one of the two roots given by \eqref{eq:x}.
Similar to the case (A),  %$\tsa_1(t)$ and $ \tsb_1(t)$ split the roots of $C_{W-2}(t)$.
 from the first equation in \eqref{eq:aaa} we obtain  $\tsb_0(t)=-(1+t)\tsb_1(t)$.
Take any root $t_1$ of $\tsa_1 (t)$ and substitute it into the second equation in \eqref{eq:aaa}.
Since the roots of $\tsa_1 (t)$ consist of the roots of $\tsC_{W-2}(t)$, we obtain $\phi_1(2-\phi_1)=0$.}
The solution $\phi_1=0$ implies $\phi_2 = 1$ and $\phi_1=2$ gives  $\phi_2 = -1$. The case $\phi_1=2,\phi_2 = -1$ gives a solution  described above in the case (A). In the case $\phi_1=0,\phi_2 = 1$ we have  solutions to \eqref{eq:2AR} obtained from  arbitrary splitting
\be
\label{eq:AR2_eq}
\tsC_{W-2}(t)=\tsp(t)\tsq(t)
\ee
of $\tsC_{W-2}(t)$
into a product of two non-zero polynomials $\tsp(t)$ and $\tsq(t)$  and setting \be
\label{eq:split_again}
\mbox{$\tsa_0(t)=(1+t)\tsp(t)$,
$\tsa_1(t)=\tsp(t)$, $\tsb_0(t)=(1+t)\tsq(t)$ and $ \tsb_1(t)=-\tsq(t)$.}
\ee

(C) Assume $\phi_2 = 1 + \phi_1$.  The equations \eqref{eq:AR2w} become
\begin{equation*}
  \begin{aligned}
  \tsa_1(t) \tsb_1(t) &= -(1+\phi_1) \tsC_{W-2}(t),\\
  \tsa_0(t) \tsb_1(t) + \tsa_1(t) \tsb_0(t)  &= -\phi_1 (t \tsC_{W-2}(t)+1) +\phi_1 (1+\phi_1)(\tsC_{W-2}(t)-1),\\
  \tsa_0(t) \tsb_0(t) + 2t \tsa_1(t) \tsb_1(t) &= t^2 \tsC_{W-2}(t) + t + 1 + \phi_1^2 t \tsC_{W-2}(t) + (1+\phi_1)^2 (\tsC_{W-2}(t)-(t+1)).
  \end{aligned}
\end{equation*}
One of the two solutions for $\tsa_1 (t)\tsb_0(t) $ is $\tsa_1 (t)\tsb_0(t)= \tsC_{W-2}(t+(\phi_1+1)^2)$.
Similarly to the above we obtain $\phi_1(2+\phi_1)=0$. The solution $\phi_1=0$ gives $\phi_2 = 1$ and hence the same set of solutions to equations \eqref{eq:AR2w} as in Case (B).
The solution $\phi_1=-2$ gives $\phi_2 = -1$ and is covered in Case (A).
\end{proof}

\begin{corollary}\label{cor:AR1_3_1ab3}
Summarizing the findings in case (i) of Theorem~\ref{th:AR2_3_3},  when $\phi_2=-1$, the  solutions to \eqref{eq:2AR} exist when  $|\phi_1|\geq 2$ and can be constructed as follows:
make an  arbitrary split
\eqref{eq:AR2_eq}
of $\tsC_{W-2}(t)$
into a product of two non-zero polynomials $\tsp(t)$ and $\tsq(t)$,  set
$\tsa_1(t)=\tsp(t)$, $ \tsb_1(t)=\tsq(t)$ and
\be
{\rm either}\;\;
\left\{
  \begin{array}{c}
   \tsa_0(t)=(z_2+z_1t)\tsp(t) \\
 \tsb_0(t)=(z_1+z_2t)\tsq(t) \\
  \end{array}
\right.
\;\;{\rm or}\;\;
\left\{
  \begin{array}{c}
   \tsa_0(t)=(z_1+z_2t)\tsp(t) \\
 \tsb_0(t)=(z_2+z_1t)\tsq(t) ,\\
  \end{array}
\right.
\ee
where $z_1$ and $z_2$ are defined in \eqref{eq:z}.
\end{corollary}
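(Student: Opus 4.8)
The plan is to treat this corollary as the explicit verification of the construction already isolated in case (A) of the proof of Theorem~\ref{th:AR2_3_3}. When $\phi_2 = -1$, the system \eqref{eq:2AR} specializes to \eqref{eq:three_eq}, so it suffices to check that the stated choices of $\tsa_0, \tsa_1, \tsb_0, \tsb_1$ satisfy those three equations, and to confirm that realness of the construction forces $|\phi_1| \geq 2$.

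First I would substitute $\tsa_1 = \tsp$ and $\tsb_1 = \tsq$ into the first equation of \eqref{eq:three_eq}; it holds at once since $\tsp\tsq = \tsC_{W-2}$ by the split \eqref{eq:AR2_eq}. For the two remaining equations I would insert $\tsa_0 = (z_2 + z_1 t)\tsp$ and $\tsb_0 = (z_1 + z_2 t)\tsq$ (the first option; the second is symmetric under $z_1 \leftrightarrow z_2$), factor out the common $\tsC_{W-2} = \tsp\tsq$, and reduce the system to the two scalar identities
\begin{eqnarray*}
(z_2 + z_1 t)(z_1 + z_2 t) &=& t^2 + (\phi_1^2 - 2) t + 1, \\
(z_2 + z_1 t) + (z_1 + z_2 t) &=& -\phi_1 (t+1).
\end{eqnarray*}

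The key step is to use that $z_1, z_2$ defined in \eqref{eq:z} are the roots of $z^2 + \phi_1 z + 1 = 0$, so by Vieta's formulas $z_1 z_2 = 1$ and $z_1 + z_2 = -\phi_1$, whence $z_1^2 + z_2^2 = \phi_1^2 - 2$. Expanding the left-hand sides above and substituting these relations makes both identities hold for all $t$, which establishes \eqref{eq:three_eq} and hence \eqref{eq:2AR}. Finally I would observe that the required matrices are real, so $z_1, z_2$ must be real; the discriminant $\phi_1^2 - 4$ of $z^2 + \phi_1 z + 1$ is nonnegative exactly when $|\phi_1| \geq 2$, which recovers the claimed parameter range. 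The only obstacle here is clerical, namely organizing the expansion, since all the mathematical content is carried by the two Vieta relations already inherited from case (A).
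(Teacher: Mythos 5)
Your proposal is correct and follows essentially the same route as the paper: case (A) of the proof of Theorem~\ref{th:AR2_3_3} reduces \eqref{eq:three_eq} to exactly your two scalar identities (with $(\lambda_0,\lambda_1,\mu_0,\mu_1)=(z_2,z_1,z_1,z_2)$ or the swap) and solves them via $z^2+\phi_1 z+1=0$, whereas you run the same computation in the verification direction using the Vieta relations $z_1z_2=1$, $z_1+z_2=-\phi_1$. Your closing observation that realness of $z_1,z_2$ requires the discriminant $\phi_1^2-4\geq 0$ correctly accounts for the range $|\phi_1|\geq 2$ in the corollary, with the non-existence for $|\phi_1|<2$ deferred, as in the paper, to the coefficient-matching argument inside Theorem~\ref{th:AR2_3_3}.
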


\subsubsection{$\bfA$ is symmetric and 5-diagonal, $\bfB$ is diagonal}

\begin{theorem}\label{th:AR2_5_1}
Let $W\geq 4$ and the matrix $\bfW $ be defined in \eqref{eq:AR2W}. There exist matrices
$\bfA=(a_{i,j})_{i,j=0}^A$ and  $\bfB=(b_{i, j})_{i,j=0}^B$ with  $A\geq 2$ and $B \geq 2$, where $\bfA$ is 5-diagonal and  $\bfB$ is diagonal,
such that $\bfW= \bfA \ast \bfB$ if and only if  either (a) $\phi_2=-1$ or (b)  $\phi_1=0$ and $\phi_2=1$.
\end{theorem}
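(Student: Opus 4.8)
The plan is to exploit that $\bfB$ is diagonal, so that Corollary~\ref{prop:p_diag} converts the matrix equation $\bfW=\bfA\ast\bfB$ into the polynomial identities
\[
\tsw_i(t)=\tsa_i(t)\,\tsb_0(t),\qquad i=0,1,2,
\]
where $\tsb_0$ is the generating function of the diagonal of $\bfB$, a polynomial of degree $B$ with $\tsb_0\neq0$. The key structural observation is that $\tsb_0$ must be a common divisor of $\tsw_0,\tsw_1,\tsw_2$, and by Lemma~\ref{lem:AR2w} the simplest of these is $\tsw_2(t)=-\phi_2\,\tsC_{W-2}(t)$.

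\textbf{Necessity.} Because $\bfA$ is genuinely $5$-diagonal we have $\tsa_2\neq0$, and since $\tsb_0\neq0$ this forces $\phi_2\neq0$; hence $\tsb_0\mid\tsC_{W-2}$. As $\tsC_{W-2}(t)=(t^{W-1}-1)/(t-1)$ is squarefree (its roots are the nontrivial $(W-1)$-th roots of unity), so is $\tsb_0$, and therefore it has $B=\deg\tsb_0\geq2$ distinct roots. At most one of these equals $-1$, so I may fix a root $t_0\neq-1$ of $\tsb_0$. Evaluating the identities at $t_0$, where $\tsC_{W-2}(t_0)=0$, and using Lemma~\ref{lem:AR2w} gives
\[
0=\tsw_1(t_0)=-\phi_1(1+\phi_2),\qquad 0=\tsw_0(t_0)=(t_0+1)(1-\phi_2^2).
\]
Since $t_0\neq-1$, the second equation yields $\phi_2^2=1$. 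If $\phi_2=-1$ we are in case (a) with $\phi_1$ unrestricted; if $\phi_2=1$, the first equation forces $\phi_1=0$, which is case (b).

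\textbf{Sufficiency.} In each admissible case I exhibit an explicit deconvolution with $\tsb_0(t)=\tsC_{W-2}(t)$, i.e. $\bfB=\bfI_{W-1}$ (so $B=W-2\geq2$) and $A=2$. Specialising Lemma~\ref{lem:AR2w} exactly as in the proof of Theorem~\ref{th:AR2_3_3}, for $\phi_2=-1$ one finds $\tsw_2=\tsC_{W-2}$, $\tsw_1=-\phi_1(t+1)\tsC_{W-2}$ and $\tsw_0=(t^2+\phi_1^2t+1)\tsC_{W-2}$, whence $\tsa_2=1$, $\tsa_1=-\phi_1(t+1)$, $\tsa_0=t^2+\phi_1^2t+1$; for $\phi_1=0,\phi_2=1$ one finds $\tsw_2=-\tsC_{W-2}$, $\tsw_1=0$, $\tsw_0=(t^2+1)\tsC_{W-2}$, whence $\tsa_2=-1$, $\tsa_1=0$, $\tsa_0=t^2+1$. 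In both cases $\tsC_{W-2}$ divides each $\tsw_i$ exactly, the quotients $\tsa_i$ have the correct degrees $A,A-1,A-2$, and the symmetric $3\times3$ matrix $\bfA$ obtained by imposing $\diag_{-i}(\bfA)=\diag_i(\bfA)$ is genuinely $5$-diagonal since $\tsa_2\neq0$; the identity $\bfA\ast\bfB=\bfW$ then holds by construction via Corollary~\ref{prop:p_diag}.

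\textbf{Main obstacle.} The delicate point is justifying $\phi_2\neq0$ (which licenses $\tsb_0\mid\tsC_{W-2}$) and genuinely exploiting $B\geq2$: a single root of $\tsb_0$ could coincide with $-1$, and only the guaranteed existence of a second, distinct root $t_0\neq-1$ turns $\tsw_0(t_0)=0$ into the decisive constraint $\phi_2^2=1$. The hypothesis that $\bfA$ is truly pentadiagonal is precisely what excludes the degenerate regime $\phi_2=0$, in which $\bfW$ collapses to a $3$-diagonal matrix and falls under Theorem~\ref{th:AR1_3_1} instead.
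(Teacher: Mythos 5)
Your proof is correct and follows essentially the same route as the paper: since $\bfB$ is diagonal, Corollary~\ref{prop:p_diag} reduces $\bfW=\bfA\ast\bfB$ to $\tsw_i=\tsa_i\tsb_0$ ($i=0,1,2$), two distinct roots of $\tsb_0$ among the roots of $\tsC_{W-2}(t)$ (guaranteed by $B\geq 2$) force $\phi_1(1+\phi_2)=0$ and $(t_0+1)(1-\phi_2^2)=0$ at a root $t_0\neq -1$, and sufficiency is established by exhibiting the polynomial quotients exactly as in the paper's cases (a) and (b). The only differences are cosmetic: you fix the particular split $\tsp(t)=1$, $\tsq(t)=\tsC_{W-2}(t)$ (so $\bfB$ is the identity) where the paper allows an arbitrary factorization $\tsC_{W-2}(t)=\tsp(t)\tsq(t)$ with $\deg\tsq\geq 2$, and you usefully make explicit two points the paper leaves implicit, namely that $\tsC_{W-2}(t)$ is squarefree so at most one root of $\tsb_0$ equals $-1$, and that genuine pentadiagonality of $\bfA$ (equivalently the AR(2) requirement $\phi_2\neq 0$) is what licenses $\tsb_0\mid\tsC_{W-2}$.
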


\begin{proof}
Assume that $\bfW= \bfA \ast \bfB$.
From \eqref{eq:W_L_R3} we obtain
\begin{eqnarray}\label{eq:ar2_diag}
  \tsw_2(t) =  \tsa_2 (t) \tsb_0 (t) ,\;\;
  \tsw_1(t) =  \tsa_1 (t) \tsb_0 (t) ,\;\;
  \tsw_0(t) = \tsa_0(t) \tsb_0(t), \;\; \forall t \in \spC\,.
\end{eqnarray}
Since $\phi_2\neq 0$, from the left equalities in \eqref{eq:AR2w} and \eqref{eq:ar2_diag} we deduce that
all the roots of $ \tsC_{W-2}(t)$ are the roots of $\tsa_2(t) \tsb_0 (t)$.
Since by assumption $B\ge 2$, at least two of the roots of $ \tsC_{W-2}(t)$  are the roots of $\tsb_0 (t)$. Let $t_1$ and $t_2$ be two of these roots.

Suppose that the conditions on $\phi_1$ and $\phi_2$ are not fulfilled.
From the second equality  in~\eqref{eq:AR2w}, $\tsw_1(t_i)= 0$
but the second equality in \eqref{eq:ar2_diag} yields $\tsw_1(t_i)=-\phi_1(1+\phi_2)$, $i=1,2$.
From the third equality  in \eqref{eq:AR2w} $\tsw_0(t_i)= 0$
but the third equality in \eqref{eq:ar2_diag} yields $\tsw_0(t_i)=(1-\phi_2^2)(t_i+1)$, $i=1,2$.
This is possible only in the cases (a) $\phi_2=-1$ and (b) $\phi_1=0$, $\phi_2=1$. This contradiction
proves the necessity of the conditions on $\phi_1$ and $\phi_2$.

Assume (a): $\phi_2 = -1$. Then
\begin{eqnarray*}
  \tsw_2(t) =  \tsC_{W-2}(t),\
  \tsw_1(t) = -\phi_1 (\tsC_{W-2}(t)(t+1), \
  \tsw_0(t) = \tsC_{W-2}(t)(t^2 + \phi_1^2 t + 1).
\end{eqnarray*}
Therefore,
\begin{eqnarray*}
  \tsa_0(t) \tsb_0(t) &=& \tsC_{W-2}(t)(t^2 + \phi_1^2 t + 1), \\
  \tsa_1(t) \tsb_0(t) &=& -\phi_1 (\tsC_{W-2}(t)(t+1), \\
  \tsa_2 (t) \tsb_0 (t) &=& \tsC_{W-2}(t).
\end{eqnarray*}
 Let $\tsC_{W-2}(t)=\tsp(t)\tsq(t)$, where $\tsp(t)$ is a non-zero polynomial of any degree including 0 and $\tsq(t)$ is a polynomial of degree at least 2. Then we can choose
\be
\label{eq:ar4a}
\mbox{
$\tsb_0 (t)= \tsq(t)$, $\tsa_2 (t)= \tsp(t)$, $\tsa_1(t)=-\phi_1(t + 1)\tsp(t)$ and $\tsa_0(t)=(t^2 + \phi_1^2 t + 1)\tsp(t)$.}
\ee
 Now assume (b): $\phi_1=0$, $\phi_2=1$. Then
\begin{eqnarray*}
  \tsw_2(t) = - \tsC_{W-2}(t),\
  \tsw_1(t) = 0,\
  \tsw_0(t) = (t^2+1)\tsC_{W-2}(t).
\end{eqnarray*}
Therefore,
\begin{eqnarray*}
  \tsa_0(t) \tsb_0(t) =(t^2+1)\tsC_{W-2}(t), \;\;
  \tsa_1(t) \tsb_0(t) = 0, \;\;
  \tsa_2 (t) \tsb_0 (t) = - \tsC_{W-2}(t).
\end{eqnarray*}
Again, let $\tsC_{W-2}(t)=\tsp(t)\tsq(t)$ with the same assumptions on $\tsp(t)$ and $\tsq(t)$. Then we can choose
\be
\label{eq:ar4}
\mbox{$\tsb_0 (t)= \tsq(t)$, $\tsa_2 (t)= -\tsp(t)$, $\tsa_1(t)= 0$  and $\tsa_0(t)=(t^2 + 1)\tsp(t)$.}
\ee
\end{proof}

\begin{remark}\label{rem:B2}
For technical reason, Theorem~\ref{th:AR2_5_1} does not cover the case $B=1$. This case can be treated separately as follows.
First, similarly to the discussion in Section~\ref{sec:ar1a}, the polynomial $\tsC_{W-2}(t)$ is divisible by a polynomial $\tsq(t)=\tsb_0 (t)$ of degree 1 {with real coefficients} if and only if $W$ is {odd}; this polynomial is $\tsb_0 (t)=\lambda(1+t)$ with $\lambda \neq 0$.
Consider, for {odd} $W$,  the equalities \eqref{eq:AR2w} and \eqref{eq:ar2_diag}. The first and third equalities in \eqref{eq:AR2w} show that
{$\tsa_0 (t)=\tsw_0(t)/\tsb_0 (t)$ and $\tsa_2 (t)=\tsw_2(t)/\tsb_0 (t)$ are polynomials for arbitrary values of $\phi_1$ and $\phi_2$. However,
the second  equality in \eqref{eq:AR2w} implies that $\tsa_1 (t)=\tsw_1(t)/\tsb_0 (t)$ is a polynomial
if and only if $\tsw_1(-1) = 0$, that is,} either $\phi_2=-1$ or $\phi_1=0$ (in the latter case, $\tsa_1 (t)=0$).
\end{remark}

\begin{remark}\label{rem:remainder}The main steps in the proofs of Theorems~\ref{th:AR1_3_1} and \ref{th:AR2_5_1} is  calculation of  remainders
of division of $\tsw_k(t)$ by $\tsC_{W-p}(t)$,  where $p=1$ and $k=0$ for AR(1) and $p=2$ and $k=0,1$ for AR(2). Indeed, denote such a reminder as $\tsr_k(t)$ and $B$ roots of $\tsb_0 (t)$, which are also the roots of    $\tsC_{W-p}(t)$, as $t_1,\ldots,t_p$ (these roots exist since $B\ge p$). Then $\tsw_k(t_i) = \tsr_k(t_i)$ for any $i=1,\ldots,p$.
If $p$ is larger than the degrees of the remainders $\tsr_k(t)$ for each $k$, then $\{t_i\}_{i=1}^p$ are the roots of $\tsw_k(t)$ if and only if the remainders $\tsr_k(t_i)$ are zero. This is the case of Theorem~\ref{th:AR1_3_1} and Theorem~\ref{th:AR2_5_1}.
\end{remark}

\begin{remark}\label{rem:not_existance}
Theorems~\ref{th:AR1_3_1}, \ref{th:AR2_3_3} and \ref{th:AR2_5_1} show that for stationary  AR(1) and AR(2) models the deconvolution
$\bfW=\bfA \ast \bfB$ with $A,B \geq 2$
cannot be performed  as  the necessary and sufficient conditions in these theorems contradict to the stationarity conditions.
However, in view of Remark~\ref{rem:B2}, if $W$ is {odd} and $B=1$ then the deconvolutions $\bfW=\bfA \ast \bfB$ exist when $\phi_1=0$ and $\phi_2$ is arbitrary.
\end{remark}

\section{Construction of non-negative definite matrices $\bfA$ and~$\bfB$ such that $\bfW= \bfA \ast \bfB$}
\label{sec:ABnonnegative}

In this section, we additionally assume that symmetric matrices $\bfA$ and~$\bfB$ such that $\bfA \ast \bfB=\bfW$  are non-negative definite.
The matrix $\bfW$ is as in the previous section; that is, it is given by either \eqref{eq:AR1W} or \eqref{eq:AR2W}.

\subsection{AR(1)}
\label{sec:ar1_pos}

Assume that $\mathbf{W}$ has the form \eqref{eq:AR1W}.
Under the conditions of Theorem~\ref{th:AR1_3_1},
symmetric matrices $\bfA$ and $\bfB$ with $\bfW= \bfA \ast \bfB$  exist if and only if $|\phi_1|=1$.
A general method of construction of such $\bfA$ and $\bfB$ for $\phi_1=\pm 1$ is given by formulas \eqref{eq:productAR1} and \eqref{eq:productAR1a} at the end of the proof of Theorem~\ref{th:AR1_3_1} in terms of the generating functions of the two diagonals of $\bfA$ and one diagonal of $\bfB$ (recall that the matrix $\bfB$ is diagonal).

In the present case, where we require
 matrices $\bfA$ and $\bfB$ to be non-negative definite,  we need the following lemma.

 \begin{lemma}
 \label{lem:polyn}
 Let $A\geq 1$ be a positive integer and
 $\tsp(t)= c_0 + c_1 t + \ldots+ c_{A-2}t^{A-2}+c_{A-1}t^{A-1}$ be a polynomial of degree $A-1\geq 0$.
 Consider a symmetric 3-diagonal matrix $\bfA=(a_{i,j})_{i,j=0}^A$ defined by~\eqref{eq:productAR1a}; that is, a matrix with the gf of the main diagonal $\tsa_0(t)=(t+1)\tsp(t)$ and the gf of
 the first diagonal $\tsa_1 (t)= G_{\diag_1(\bfA)}(t)= \beta \tsp(t)$, where $\beta=\pm 1$. For this matrix and for any vector $\bfz=(z_0, \ldots, z_A)^\top \in \mathbb{R}^{A+1}$ we have
\be \label{eq:productAR1b}
\bfz^\top \bfA \bfz= \sum_{i=0}^{A-1}c_i(z_i+\beta z_{i+1})^2 \, .
\ee
 \end{lemma}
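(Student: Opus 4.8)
The plan is to read off the entries of $\bfA$ from the two given generating functions and then recognize $\bfz^\top\bfA\bfz$ as a weighted sum of squares via a single factorization. Writing $\tsp(t)=\sum_{i=0}^{A-1}c_i t^i$, the gf of the first diagonal $\tsa_1(t)=\beta\tsp(t)$ yields $a_{j,j+1}=a_{j+1,j}=\beta c_j$ for $0\le j\le A-1$, and the gf of the main diagonal $\tsa_0(t)=(t+1)\tsp(t)=\sum_{j=0}^{A-1}c_j t^j+\sum_{j=1}^{A}c_{j-1}t^j$ yields $a_{0,0}=c_0$, $a_{A,A}=c_{A-1}$, and $a_{j,j}=c_j+c_{j-1}$ for $1\le j\le A-1$. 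The only place where bookkeeping matters is these two endpoint entries, which carry a single coefficient rather than the sum $c_j+c_{j-1}$.

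Next I would exhibit the factorization $\bfA=\bfM^\top\bfD\bfM$. Let $\bfD=\diag(c_0,\ldots,c_{A-1})\in\spR^{A\times A}$ and let $\bfM\in\spR^{A\times(A+1)}$ be the bidiagonal matrix with $M_{i,i}=1$ and $M_{i,i+1}=\beta$ for $i=0,\ldots,A-1$ and all other entries zero, so that $(\bfM\bfz)_i=z_i+\beta z_{i+1}$. A short index computation of $(\bfM^\top\bfD\bfM)_{j,k}=\sum_i M_{i,j}c_i M_{i,k}$ reproduces exactly the entries above: on the main diagonal one obtains $c_0$ at $j=0$, then $c_j+c_{j-1}$ for $1\le j\le A-1$ (using $\beta^2=1$ from $\beta=\pm1$), and $c_{A-1}$ at $j=A$; on the super- and sub-diagonal the only surviving summation index is $i=j$, giving $\beta c_j$; all remaining entries vanish. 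Hence $\bfA=\bfM^\top\bfD\bfM$, and therefore
\bea
  \bfz^\top\bfA\bfz=(\bfM\bfz)^\top\bfD(\bfM\bfz)=\sum_{i=0}^{A-1}c_i(z_i+\beta z_{i+1})^2,
\eea
which is \eqref{eq:productAR1b}.

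Alternatively one can skip the factorization and expand both sides directly: for a symmetric tridiagonal matrix $\bfz^\top\bfA\bfz=\sum_{i=0}^A a_{i,i}z_i^2+2\sum_{i=0}^{A-1}a_{i,i+1}z_iz_{i+1}$, while the right-hand side of \eqref{eq:productAR1b} expands (using $\beta^2=1$) as $\sum_{i=0}^{A-1}c_iz_i^2+\sum_{i=0}^{A-1}c_iz_{i+1}^2+2\beta\sum_{i=0}^{A-1}c_iz_iz_{i+1}$; re-indexing the two single-square sums recovers the diagonal coefficients $c_0$, $c_j+c_{j-1}$, $c_{A-1}$ and the cross terms $\beta c_j$, so the two expressions agree term by term. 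I expect no genuine obstacle here, since the statement is an algebraic identity; the only thing to watch is the correct handling of the endpoint squares $z_0^2$ and $z_A^2$. I would prefer the factorization presentation, as it makes the intended use of the lemma transparent: when all coefficients $c_i$ are non-negative, $\bfD$ is non-negative definite, and hence so is $\bfA=\bfM^\top\bfD\bfM$.
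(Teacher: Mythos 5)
Your proof is correct, and your entry bookkeeping ($a_{0,0}=c_0$, $a_{j,j}=c_j+c_{j-1}$ for $1\le j\le A-1$, $a_{A,A}=c_{A-1}$, $a_{j,j+1}=a_{j+1,j}=\beta c_j$) agrees exactly with the paper's, which states the same entries uniformly via the convention $c_{-1}=c_A=0$. Your ``alternative'' argument --- expanding $\bfz^\top\bfA\bfz=\sum_{i} a_{i,i}z_i^2+2\sum_{i} a_{i,i+1}z_iz_{i+1}$ and regrouping into $\sum_{i=0}^{A-1}c_i(z_i+\beta z_{i+1})^2$ using $\beta^2=1$ --- is essentially verbatim the paper's proof. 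Your preferred presentation via the congruence $\bfA=\bfM^\top\bfD\bfM$, with $\bfD=\diag(c_0,\ldots,c_{A-1})$ and $\bfM\in\spR^{A\times(A+1)}$ bidiagonal with $M_{i,i}=1$, $M_{i,i+1}=\beta$, is a genuine repackaging of the same computation: verifying $(\bfM^\top\bfD\bfM)_{j,k}=\sum_i M_{i,j}c_iM_{i,k}$ entrywise is the identical bookkeeping in matrix form (with $\beta^2=1$ invoked at the same two endpoint checks), so it adds nothing for the identity \eqref{eq:productAR1b} itself, but it buys clarity downstream: the conclusion the paper extracts in Corollary~\ref{cor:AR1_3_1ab} --- that $\bfA$ is non-negative definite if all $c_i\ge 0$ --- becomes immediate from the factorization, and the converse is also transparent, since $\bfM$ has full row rank $A$ (its leading $A\times A$ block is unitriangular), so for each $i$ one can choose $\bfz$ with $\bfM\bfz=\bfe_i$ and read off $\bfz^\top\bfA\bfz=c_i$. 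In short: correct, with your secondary route matching the paper and your primary route a mildly different, slightly more structural presentation of the same sum-of-squares fact.
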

\begin{proof}
Since $\tsa_0(t)=(t+1)\tsp(t)$, the diagonal elements of $\bfA$ are $a_{i,i}=c_i+c_{i-1}$, where $i=0,1, \ldots, A$ and we set $c_{-1}=c_A=0.$
The elements on the first diagonal of $\bfA$ are $a_{i,i+1}= \beta c_i$ ($i=0,1, \ldots, A-1$). We also have  $a_{i,i+1}=a_{i+1,i}$ and $a_{i,j}=0$ for $|i-j|>1$.
Therefore, for any $\bfz=(z_0, \ldots, z_A)^\top $ we obtain
$$
\bfz^\top \bfA \bfz=\sum_{i,j=0}^A z_ia_{i,j}z_j=c_0z_0^2\!+\!c_{A-1}z_A^2\!+\!\sum_{i=1}^{A-1} z_i^2 (c_{i-1}\!+\!c_i)\!+\! 2\beta\sum_{i=0}^{A-1}c_i z_i z_{i+1}= \sum_{i=0}^{A-1}c_i(z_i\!+\!\beta z_{i+1})^2 \, .
$$
\end{proof}

Theorem~\ref{th:AR1_3_1} and Lemma~\ref{lem:polyn} with $\beta=-\phi_1$ yield the following corollary.

\begin{corollary}\label{cor:AR1_3_1ab}
Let $W\geq 2$ and  the matrix $\bfW$ be defined by \eqref{eq:AR1W}.
There exist non-negative definite symmetric matrices
$\bfA=(a_{i,j})_{i,j=0}^A$ and  $\bfB=(b_{i, j})_{i,j=0}^B$ with  some $A,B \ge 1$
 such that $\bfW= \bfA \ast \bfB$ if and only if  $|\phi_1|= 1$ and both polynomials $\tsp(t)$ and $\tsq(t)$ in
 \eqref{eq:productAR1} are non-constants and have non-negative coefficients.
\end{corollary}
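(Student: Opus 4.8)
The plan is to combine Theorem~\ref{th:AR1_3_1}, which already settles the case $|\phi_1|=1$ and fixes the form of any admissible deconvolution, with the quadratic-form identity of Lemma~\ref{lem:polyn}, and then to check non-negative definiteness of the diagonal factor $\bfB$ and of the banded factor $\bfA$ separately. By Theorem~\ref{th:AR1_3_1} a symmetric deconvolution $\bfW=\bfA\ast\bfB$ with $A,B\ge1$ can exist only for $|\phi_1|=1$, and then $\bfA$ is $3$-diagonal and $\bfB$ diagonal. Writing $\tsp:=-\phi_1\tsa_1$ and $\tsq:=\tsb_0$, the first identity in \eqref{eq:polyn} becomes $\tsp(t)\tsq(t)=\tsC_{W-1}(t)$, a factorization of the form \eqref{eq:productAR1}, and the second identity forces $\tsa_0(t)=(t+1)\tsp(t)$; thus every such deconvolution is exactly the one produced by \eqref{eq:productAR1a}, and the whole question reduces to deciding when the $\bfA,\bfB$ built from a factorization are non-negative definite.

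The diagonal factor is immediate: by Corollary~\ref{prop:p_diag} the main diagonal of $\bfB$ consists of the coefficients of $\tsb_0(t)=\tsq(t)$, so $\bfB$ is non-negative definite if and only if $\tsq$ has non-negative coefficients, and $B\ge1$ says exactly that $\tsq$ is non-constant. For $\bfA$ I would use the identity \eqref{eq:productAR1b} of Lemma~\ref{lem:polyn} with $\beta=-\phi_1\in\{-1,1\}$, namely $\bfz^\top\bfA\bfz=\sum_{i=0}^{A-1}c_i(z_i+\beta z_{i+1})^2$, where $c_0,\ldots,c_{A-1}$ are the coefficients of $\tsp$. The ``if'' direction is then free of work: if all $c_i\ge0$ the right-hand side is a non-negative combination of squares, so $\bfA$ is non-negative definite, and together with the statement for $\bfB$ and with $\bfW=\bfA\ast\bfB$ from Theorem~\ref{th:AR1_3_1} this gives one direction of the equivalence.

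The substantive step is the converse for $\bfA$: non-negative definiteness must force every $c_i\ge0$. The point is that the $A$ linear forms $u_i(\bfz)=z_i+\beta z_{i+1}$, $i=0,\ldots,A-1$, in the $A+1$ variables $z_0,\ldots,z_A$ are linearly independent, because the first $A$ columns of their $A\times(A+1)$ coefficient matrix form an upper-bidiagonal matrix with $1$'s on the diagonal and $\beta$'s on the superdiagonal, hence of determinant $1$. The map $\bfz\mapsto(u_0(\bfz),\ldots,u_{A-1}(\bfz))$ is therefore onto $\spR^{A}$, so for any index $i_0$ I can choose $\bfz$ with $u_{i_0}(\bfz)=1$ and $u_i(\bfz)=0$ for $i\ne i_0$, whence $\bfz^\top\bfA\bfz=c_{i_0}$. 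Were some $c_{i_0}<0$, this exhibits a vector with $\bfz^\top\bfA\bfz<0$, contradicting non-negative definiteness. I expect this surjectivity-of-the-forms argument to be the only genuinely non-routine part, precisely because the squares in \eqref{eq:productAR1b} are squares of the forms $u_i$ rather than of the coordinates $z_i$, so positivity of the $c_i$ cannot simply be read off the matrix entries.

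Assembling the two characterizations yields the corollary: non-negative definite symmetric $\bfA$ and $\bfB$ with $\bfW=\bfA\ast\bfB$ exist if and only if $|\phi_1|=1$ and the factorization \eqref{eq:productAR1} can be chosen with $\tsp$ and $\tsq$ both having non-negative coefficients, the necessity of $|\phi_1|=1$ being inherited verbatim from Theorem~\ref{th:AR1_3_1}. The non-constancy clause is then pure bookkeeping: $B\ge1$ is non-constancy of $\tsq$, while taking $\tsp$ non-constant is what makes $\bfA$ a genuine banded matrix rather than the degenerate minimal factor, and I would simply record this to match the statement.
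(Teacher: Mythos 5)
Your proposal is correct and takes essentially the same route as the paper: reduce via Theorem~\ref{th:AR1_3_1} to the split \eqref{eq:productAR1}--\eqref{eq:productAR1a}, observe that the diagonal $\bfB$ is non-negative definite iff $\tsq(t)$ has non-negative coefficients, and apply the quadratic-form identity \eqref{eq:productAR1b} of Lemma~\ref{lem:polyn} to characterize non-negative definiteness of $\bfA$ by non-negativity of the coefficients of $\tsp(t)$. The only difference is that you make explicit two steps the paper leaves implicit --- that every symmetric deconvolution arises from a split (via the normalization $\tsp=-\phi_1\tsa_1$) and the converse direction for $\bfA$ via linear independence of the forms $z_i+\beta z_{i+1}$ --- which the paper asserts as immediate consequences of Theorem~\ref{th:AR1_3_1} and \eqref{eq:productAR1b}.
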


\begin{proof}
It follows from \eqref{eq:productAR1b} that the matrix $\bfA$ defined in Lemma~\ref{lem:polyn} is non-negative definite if and only if
all coefficients $c_i$ of the polynomial $\tsp(t)$ are non-negative. Since  $\bfB$ is diagonal, from the first equation in \eqref{eq:productAR1a} we obtain that
$\bfB$  is non-negative definite if and only if
all coefficients  of the polynomial $\tsq(t)$ are non-negative.
\end{proof}

Since the polynomial $\tsp(t)$ in \eqref{eq:productAR1} can have zero degree (this would correspond to a $2 \times 2$ matrix~$\bfA$), the split \eqref{eq:productAR1} with both polynomials having non-negative coefficients, can always be made. If we assume $A>1$, then both polynomials
$\tsp(t)$ and $\tsq(t)$  in \eqref{eq:productAR1} have to have degree at least one and the problem of  construction of non-constant  polynomials $\tsp(t)$ and $\tsq(t)$ with  non-negative coefficients
 such that  \eqref{eq:productAR1} holds becomes more difficult. This problem was studied in \cite{Zhigljavsky.etal2016}.
  Theorem 1 in \cite{Zhigljavsky.etal2016} states that, under the additional condition $\tsp(0)=1$, if such polynomials  exist then all their  coefficients are either zeros or ones. Corollary 3 in \cite{Zhigljavsky.etal2016} implies that such polynomials exist if and only if $W$ is not prime. Paper \cite{Zhigljavsky.etal2016} also provides  different  schemes of construction of such polynomials $\tsp(t)$ and $\tsq(t)$ for composite  $W$.

\subsection{AR(2): $\bfA$ and $\bfB$ are symmetric and both matrices are 3-diagonal}

In view of Theorem~\ref{th:AR2_3_3} we need to consider the following two cases only:  (i) $\phi_2 = -1$ and $|\phi_1|\geq 2$ and  (ii)
$\phi_1=0,\phi_2 = 1$.

In  case (i), when $\phi_2 = -1$ and $|\phi_1|\geq 2$, the way of constructing   solutions to \eqref{eq:2AR}
is described in Corollary~\ref{cor:AR1_3_1ab3}. Under the additional assumption $|\phi_1|= 2$ we have for $z_1,z_2$ from \eqref{eq:z}:
 $z_1=z_2=-1$ for $\phi_1=2$ and $z_1=z_2=1$ for $\phi_1=-2$. Similarly to Corollary~\ref{cor:AR1_3_1ab}, Lemma~\ref{lem:polyn} and Corollary~\ref{cor:AR1_3_1ab3} yield the following.

\begin{corollary}\label{cor:AR1_3_1ab5}
Let $W\geq 2$ and  the matrix $\bfW$ be defined by \eqref{eq:AR2W} with  $|\phi_1|= 2$ and $\phi_2 = -1$.
Make a split \eqref{eq:AR2_eq} of $\tsC_{W-2}(t)$ into a product of two polynomials $\tsp(t)$ and $\tsq(t)$ with  non-negative coefficients.
Define  3-diagonal  matrices $\bfA$ and $\bfB$ by the following generating functions of their diagonals:
 $\tsa_0(t)=(1+t)\tsp(t)$, $ \tsb_0(t)=(1+t)\tsq(t)$, $\tsa_1(t)= \beta \tsp(t)$ and  $ \tsb_1(t)=\beta \tsq(t)$, where $\beta=-\phi_1/2 = -{\rm sign}(\phi_1)$. Then both matrices $\bfA$ and $\bfB$ are
 non-negative definite and $\bfW= \bfA \ast \bfB$.
 \end{corollary}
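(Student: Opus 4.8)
The plan is to establish the two assertions of the corollary separately: first the algebraic identity $\bfW = \bfA \ast \bfB$, and then the non-negative definiteness of $\bfA$ and $\bfB$. Since both matrices are $3$-diagonal, Lemma~\ref{lem:ABCgf} (in the reduced form \eqref{eq:2AR}) tells us that $\bfW = \bfA \ast \bfB$ holds if and only if the three generating-function identities $\tsw_2 = \tsa_1\tsb_1$, $\tsw_1 = \tsa_0\tsb_1 + \tsa_1\tsb_0$, and $\tsw_0 = \tsa_0\tsb_0 + 2t\,\tsa_1\tsb_1$ are satisfied. So the whole convolution claim reduces to verifying these three identities by direct substitution, which I would do entirely at the level of diagonal generating functions.

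First I would specialize \eqref{eq:AR2w} to $\phi_2 = -1$, giving $\tsw_2 = \tsC_{W-2}$, $\tsw_1 = -\phi_1(t+1)\tsC_{W-2}$, and $\tsw_0 = (t^2 + \phi_1^2 t + 1)\tsC_{W-2}$ (writing $\tsC_{W-2}$ for $\tsC_{W-2}(t)$). Substituting the prescribed diagonals $\tsa_0 = (1+t)\tsp$, $\tsa_1 = \beta\tsp$, $\tsb_0 = (1+t)\tsq$, $\tsb_1 = \beta\tsq$, and using $\beta^2 = 1$ together with $\tsp\tsq = \tsC_{W-2}$ from the split \eqref{eq:AR2_eq}, the three right-hand sides become $\tsa_1\tsb_1 = \tsC_{W-2}$, then $\tsa_0\tsb_1 + \tsa_1\tsb_0 = 2\beta(1+t)\tsC_{W-2}$, and finally $\tsa_0\tsb_0 + 2t\,\tsa_1\tsb_1 = [(1+t)^2 + 2t]\tsC_{W-2} = (t^2 + 4t + 1)\tsC_{W-2}$. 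The first matches $\tsw_2$ automatically; the second matches $\tsw_1$ precisely because $2\beta = -\phi_1$, which is the definition of $\beta$; the third matches $\tsw_0$ precisely because $\phi_1^2 = 4$, which is the hypothesis $|\phi_1| = 2$. This is exactly where both standing assumptions on $\phi_1$ are consumed, and I expect the matching of the constant term — checking that $(1+t)^2 + 2t$ equals $t^2 + \phi_1^2 t + 1$ only under $|\phi_1|=2$ — to be the one place that demands care.

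For non-negative definiteness I would observe that each of $\bfA$ and $\bfB$ has exactly the form treated in Lemma~\ref{lem:polyn}: a symmetric $3$-diagonal matrix whose main-diagonal gf is $(1+t)$ times a polynomial and whose first-diagonal gf is $\beta$ (with $\beta = \pm1$) times that same polynomial. Applying the lemma with $\tsp$ for $\bfA$ and with $\tsq$ for $\bfB$ yields, for any $\bfz$, the representations $\bfz^\top\bfA\bfz = \sum_i c_i(z_i + \beta z_{i+1})^2$ and $\bfz^\top\bfB\bfz = \sum_i d_i(z_i + \beta z_{i+1})^2$, where $c_i$ and $d_i$ are the coefficients of $\tsp$ and $\tsq$. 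Since the split \eqref{eq:AR2_eq} was chosen with non-negative coefficients, all $c_i, d_i \ge 0$, so both quadratic forms are sums of non-negative terms and the matrices are non-negative definite. I would also note that such a split always exists (e.g.\ the trivial $\tsp = 1$, $\tsq = \tsC_{W-2}$, since every coefficient of $\tsC_{W-2}$ equals $1$), so the construction is never vacuous.

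Finally I would remark that the convolution identity can alternatively be read off from Corollary~\ref{cor:AR1_3_1ab3} specialized to $|\phi_1| = 2$, where the roots in \eqref{eq:z} coincide, $z_1 = z_2 = -\mathrm{sign}(\phi_1)$; the only discrepancy is a harmless simultaneous sign flip of both diagonals of each matrix, which leaves $\bfA \ast \bfB$ unchanged. The self-contained generating-function check above sidesteps this sign bookkeeping and is the route I would present.
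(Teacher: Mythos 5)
Your proof is correct and complete, but it reaches the convolution identity by a different route than the paper. The paper performs no direct verification: it obtains the corollary by specializing Corollary~\ref{cor:AR1_3_1ab3} (itself distilled from case (A) of the proof of Theorem~\ref{th:AR2_3_3}) to $|\phi_1|=2$, where the two roots in \eqref{eq:z} coalesce at $z_1=z_2=-\phi_1/2=\beta$, and then invokes Lemma~\ref{lem:polyn} for non-negative definiteness --- your second part coincides with the paper's. For the identity $\bfW=\bfA\ast\bfB$ you instead substitute the four diagonal generating functions into \eqref{eq:2AR} and match against \eqref{eq:AR2w} at $\phi_2=-1$, using $\beta^2=1$, $2\beta=-\phi_1$ and $\phi_1^2=4$; the computation checks out, and your implicit reduction to the three identities $i=0,1,2$ is legitimate because the symmetry of $\bfA$ and $\bfB$ makes the $i<0$ cases of \eqref{eq:W_L_R3} automatic. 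What your self-contained check buys is a precise accounting of where each hypothesis is consumed ($2\beta=-\phi_1$ for $\tsw_1$, $\phi_1^2=4$ for $\tsw_0$); what the paper's route buys is brevity and a derivation of where the construction comes from rather than an after-the-fact verification. Your closing remark is in fact sharper than you present it: for $\phi_1=2$ the literal specialization of Corollary~\ref{cor:AR1_3_1ab3} gives $\tsa_0(t)=-(1+t)\tsp(t)$, $\tsa_1(t)=\tsp(t)$, and Lemma~\ref{lem:polyn}, applied with $-\tsp(t)$ in place of $\tsp(t)$, shows that matrix is negative semidefinite; so the simultaneous multiplication of both factors by $\beta$, harmless for the product since $(\beta\bfA)\ast(\beta\bfB)=\beta^2\,\bfA\ast\bfB=\bfA\ast\bfB$, is not mere sign bookkeeping but exactly what restores non-negative definiteness in the corollary's normalization (for $\phi_1=-2$ no flip is needed). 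Your note that the trivial split $\tsp=1$, $\tsq=\tsC_{W-2}$ always supplies non-negative coefficients, so the construction is never vacuous, is a correct small addition that the paper leaves implicit.
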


If $|\phi_1|>2$ it does not seem possible to find a general scheme of construction of non-negative definite matrices $\bfA$ and $\bfB$ satisfying $\bfW= \bfA \ast \bfB$.

In  case (ii), when $\phi_1=0,\phi_2 = 1$, general   solutions to \eqref{eq:2AR} are obtained from  \eqref{eq:AR2_eq}, an arbitrary splitting of $\tsC_{W-2}(t)$
into a product of two polynomials $\tsp(t)$ and $\tsq(t)$  and using \eqref{eq:split_again}.
Similarly to Corollary~\ref{cor:AR1_3_1ab}, Lemma~\ref{lem:polyn} yields the following: to guarantee that
 $\bfA$ and $\bfB$ are
 non-negative definite we simply make an additional requirement that  the polynomials $\tsp(t)$ and $\tsq(t)$ in
 \eqref{eq:AR2_eq} have non-negative coefficients. Lemma~\ref{lem:polyn} again justifies the non-negative definiteness of  $\bfA$ and $\bfB$.

Similarly to the discussion at the end of Section~\ref{sec:ar1_pos}, we may use the results of    \cite{Zhigljavsky.etal2016}  for establishing the existence
 of non-constant polynomials $\tsp(t)$ and $\tsq(t)$ with non-negative coefficients in
 \eqref{eq:AR2_eq} and the building schemes for the corresponding matrices $ \bfA$ and $ \bfB$ of size larger than $2\times2$. The required non-trivial split \eqref{eq:AR2_eq}
 into non-constant polynomials $\tsp(t)$ and $\tsq(t)$ with  non-negative coefficients exists if and only if $W-1$ is a composite number.

\begin{remark}
The particular cases of the AR(2) model considered in this section (where the deconvolution $\bfW= \bfA \ast \bfB$ can be performed and $\bfA$ and $\bfB$ are   non-negative definite 3-diagonal matrices)  are exactly  the cases where the  pairs of parameters $(\phi_1,\phi_2)$ of the AR(2) model are at critical points of the region of stationarity of the AR(2) model, see Figure 1.
\end{remark}

\subsection{AR(2): $\bfA$ is symmetric and 5-diagonal, $\bfB$ is diagonal}

Similarly to the results above, it can be shown that a 5-diagonal symmetric matrix $\bfA$ with generating functions of the three diagonals
$\tsa_2 (t)= \tsp(t)$, $\tsa_1(t)=\beta_1(t + 1)\tsp(t)$ and $\tsa_0(t)=(t^2 + \beta^2 t + 1)\tsp(t)$
is non-negative definite if  the coefficients of the polynomial $\tsp(t)$ are non-negative and $|\beta|\leq 1$. This implies
that, if the polynomials  $\tsp(t)$  and $\tsq(t)$ have non-negative coefficients then the two  constructions of Theorem~\ref{th:AR2_5_1},  \eqref{eq:ar4a}  and \eqref{eq:ar4} with $|\phi_1|\leq 1$, lead to non-negative definite matrices $\bfA$ and $\bfB$.

\section{Non-existence of the deconvolution  $\bfW\!=\!\bfA\!\ast\!\bfB$  for a stationary AR($p$) model with general $p$ and  diagonal~$\bfB$}

Consider the  AR($p$) model in the form
$\sum_{i=0}^p \alpha_i\xi_{n-i} = \varepsilon_n$,
where $\Var \varepsilon_n=1$ and $\alpha_p\neq 0$.
This notation is connected to the basic notation in the form
$\xi_{n} = \sum_{i=1}^p \phi_i\xi_{n-i} + \varepsilon_n$ by  simple relations $\alpha_0=1$ and $\alpha_i=-\phi_i$ for $i>0$.

Let $\bm\Sigma$ be the autocovariance matrix of this AR($p$) process. In accordance with \cite{Siddiqui1958} and \cite[Eq.~10]{Shaman1975},  the matrix $\bfW = \bm\Sigma^{-1}$  has the form
of a symmetric $(2p+1)$-diagonal matrix with
\begin{eqnarray*}
  w_{i,j}=
  \begin{cases}
  \sum_{m=0}^{\min(i,j)} \alpha_m \alpha_{m+|i-j|}\ \text{for}\ \max(i,j) < p;\\
  \sum_{m=0}^{p - |i-j|} \alpha_m \alpha_{m+|i-j|}\ \text{for}\ \max(i,j)\geq p,\ \min(i,j)\leq W-p,\  |i-j|=0,\ldots, p;\\
  \sum_{m=0}^{\min(W-i,W-j)} \alpha_m \alpha_{m + |i-j|}\ \text{for}\ \min(i,j) > W-p;\\
  0,\ \text{otherwise,}
  \end{cases}
\end{eqnarray*}
where $i,j = 0,\ldots, W$.

One of the stationarity conditions for the AR($p$) model is
$|\phi_p|<1$, see e.g. Jury's test of stability \cite[Section 3.9]{Jury1964}.
In this section, we will show that if $|\phi_p|<1$ then the deconvolution \eqref{eq:b_decomp} is impossible under the assumption that the matrix $\bfB$  is diagonal and has a size larger than $p\times p$.

In the same manner as before, we can express the diagonals of $\bfW$
through the generating functions $\tsC_M(t)$ of vectors $(1,\ldots,1)^\top\in \spR^{M+1}$
with $M=W-2p,W-2p+1,\ldots,W$:
\begin{eqnarray}\label{eq:ARdiag}
  \tsw_k(t) = \sum_{i=0}^{p-k} \alpha_i \alpha_{i+k}t^i \tsC_{W-2i-k},\ k=0,\ldots,p.
\end{eqnarray}
For example, $\tsw_p(t) = -\phi_p \tsC_{W-p}(t)$.

The expressions for $\tsw_k(t)$ in terms of $\tsC_{W-p}(t)$ play an important role for obtaining
the results for AR(1) and AR(2). In particular,
Theorems~\ref{th:AR1_3_1} and \ref{th:AR2_5_1}, where either $\bfA$ or $\bfB$  in the
deconvolution  $\bfW=\bfA\ast\bfB$  is diagonal, are based on the consideration of remainders of division of $\tsw_k(t)$ by $\tsC_{W-p}(t)$ for $k=0,1$ and AR(1)
and $k=0,1,2$ and AR(2), see Remark~\ref{rem:remainder}.

It follows from \eqref{eq:ARdiag} that $\tsw_p(t)$ is proportional to $\tsC_{W-p}(t)$ for any $p$.
Therefore, it can be proved,  similarly to the  proofs of Theorems~\ref{th:AR1_3_1} and \ref{th:AR2_5_1},
that a necessary condition for the possibility of deconvolution
$\bfW=\bfA\ast\bfB$, where $\bfB$ is a diagonal matrix and {$B>p$}, is zero remainders of
division of $\tsw_k(t)$ by $\tsC_{W-p}(t)$ for $k=0,\ldots,p-1$. The proof uses the equalities
\begin{eqnarray*}
  \tsw_k (t) = \tsa_k(t) \tsb_0(t),\ k\leq p,
\end{eqnarray*}
given in Corollary~\ref{prop:p_diag}.

The following lemma calculates the remainders.

\begin{lemma}\label{lem:remainder}
The remainder $\tsr_k(t)$ of the division of $\tsw_k(t)$ by $\tsC_{W-p}(t)$ for $k=0,\ldots, p$ is equal to
\begin{eqnarray}\label{eq:res}
  \tsr_k(t)=\sum_{i=0}^{p-k} \alpha_i \alpha_{i+k}(\tsC_{p-k-i-1}(t) - \tsC_{i-1}(t)).
\end{eqnarray}
(Here we assume that $\tsC_{-1}=0$.)
\end{lemma}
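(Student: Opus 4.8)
The plan is to avoid polynomial long division altogether and instead exploit the closed form $\tsC_M(t) = (1-t^{M+1})/(1-t)$ together with the factorisation $t^{W-p+1}-1 = (t-1)\tsC_{W-p}(t)$. The latter shows that, modulo $\tsC_{W-p}(t)$, the power $t^{W-p+1}$ may be replaced by $1$ (equivalently, the roots of $\tsC_{W-p}$ are exactly the $(W-p+1)$-th roots of unity other than $1$). My strategy is therefore to clear the denominator $1-t$ from $\tsw_k$, reduce the resulting high-degree monomials using $t^{W-p+1}\equiv 1$, and then recognise the reduced expression as $(1-t)\tsr_k(t)$.

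First I would rewrite each summand of \eqref{eq:ARdiag}. Since $t^i\tsC_{W-2i-k}(t) = (t^i - t^{\,W-i-k+1})/(1-t)$, multiplying through gives
\begin{equation*}
(1-t)\,\tsw_k(t) = \sum_{i=0}^{p-k} \alpha_i\alpha_{i+k}\bigl(t^i - t^{\,W-i-k+1}\bigr).
\end{equation*}
Because $W-i-k+1 = (W-p+1)+(p-i-k)$ with $0\le p-i-k\le p$ throughout the range $0\le i\le p-k$, and because $t^{\,W-i-k+1} - t^{\,p-i-k} = t^{\,p-i-k}(t^{W-p+1}-1)$, reducing modulo $t^{W-p+1}-1$ replaces $t^{\,W-i-k+1}$ by $t^{\,p-i-k}$, so that
\begin{equation*}
(1-t)\,\tsw_k(t) \equiv \sum_{i=0}^{p-k} \alpha_i\alpha_{i+k}\bigl(t^i - t^{\,p-i-k}\bigr) \pmod{t^{W-p+1}-1}.
\end{equation*}
On the other hand, applying $(1-t)\tsC_M(t) = 1-t^{M+1}$ term by term to the claimed formula \eqref{eq:res} (with the convention $\tsC_{-1}=0$, which matches $1-t^0=0$ at the endpoints $i=0$ and $i=p-k$) yields exactly $(1-t)\tsr_k(t) = \sum_i \alpha_i\alpha_{i+k}(t^i - t^{\,p-k-i})$, coinciding with the right-hand side above.

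It then remains to descend from this congruence to a statement about ordinary polynomial remainders, and this last step is where the care is needed. Since $t^{W-p+1}-1 = -(1-t)\tsC_{W-p}(t)$, the congruence says that $(1-t)\bigl(\tsw_k(t)-\tsr_k(t)\bigr)$ is divisible by $(1-t)\tsC_{W-p}(t)$; cancelling the nonzero factor $1-t$ in the integral domain $\spR[t]$ gives $\tsC_{W-p}(t)\mid \tsw_k(t)-\tsr_k(t)$. Finally I would verify the degree bound: every $\tsC_{p-k-i-1}$ and $\tsC_{i-1}$ occurring in \eqref{eq:res} has degree at most $p-k-1$, whereas $\deg\tsC_{W-p}=W-p\ge p$ under the standing assumption $W\ge 2p$ (needed already for $\tsC_{W-2p}$ to be defined), so $\deg\tsr_k \le p-k-1 < W-p$ and $\tsr_k$ is genuinely the remainder. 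The main obstacle is precisely this cancellation-and-degree argument: it is what converts a modular identity valid only at the roots of unity into an exact polynomial division, and it depends on $W$ being large enough that the candidate remainder cannot itself reach degree $W-p$.
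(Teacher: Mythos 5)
Your proof is correct, but it reaches the division identity by a different mechanism than the paper. The paper's proof is a one-line polynomial telescoping: it applies the identity \eqref{eq:cm} twice, first as $t^i \tsC_{W-2i-k}(t)= \tsC_{W-i-k}(t) - \tsC_{i-1}(t)$ and then as $\tsC_{W-i-k}(t) = t^{p-i-k} \tsC_{W-p}(t) + \tsC_{p-k-i-1}(t)$, so that summing over $i$ in \eqref{eq:ARdiag} exhibits quotient and remainder simultaneously: $\tsw_k(t) = \tsC_{W-p}(t)\sum_{i=0}^{p-k}\alpha_i\alpha_{i+k}t^{p-i-k} + \tsr_k(t)$, never leaving $\spR[t]$. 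You instead clear the denominator $1-t$, reduce modulo $t^{W-p+1}-1$, and then cancel $1-t$; if you make your congruence exact you find $(1-t)\bigl(\tsw_k(t)-\tsr_k(t)\bigr) = (1-t)\tsC_{W-p}(t)\sum_{i=0}^{p-k}\alpha_i\alpha_{i+k}t^{p-i-k}$, i.e.\ the same quotient, so the computational content is identical --- your replacement of $t^{W-i-k+1}$ by $t^{p-i-k}$ is precisely the paper's second use of \eqref{eq:cm} in disguise. What your route costs is the two extra bookkeeping steps you correctly flag: the cancellation of the factor $1-t$ in the integral domain $\spR[t]$, and the explicit degree bound $\deg \tsr_k \le p-k-1 < W-p$ needed to promote divisibility to an identification of the remainder. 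What it buys is that this degree check, which the paper leaves implicit (the division-with-remainder form produced by \eqref{eq:cm} only identifies $\tsr_k$ as \emph{the} remainder because of it), is made explicit; your appeal to $W\ge 2p$ is consistent with the paper's standing setup, which uses $\tsC_M$ for $M=W-2p,\ldots,W$. Your endpoint check that the convention $\tsC_{-1}=0$ matches $1-t^0=0$ at $i=0$ and $i=p-k$ is also correct, so there is no gap.
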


\begin{proof}
Directly follows from \eqref{eq:ARdiag} and the equalities $t^i \tsC_{W-2i-k}= \tsC_{W-i-k} - \tsC_{i-1}$
and $\tsC_{W-i-k} = t^{p-(i+k)} \tsC_{W-p} + \tsC_{p-k-i-1}$, which are particular cases of \eqref{eq:cm}.
\end{proof}

\begin{corollary}\label{cor:tsr}
\begin{eqnarray}
  \label{eq:res_p}
  \tsr_p(t)&=&0,\\
  \label{eq:res_p1}
  \tsr_{p-1}(t) &=& \alpha_0\alpha_{p-1} - \alpha_1\alpha_{p},\\
  \label{eq:res_0}
  \tsr_0(t) &=& \sum_{m=0}^{p-1} \left(\sum_{i=0}^{\min(m, p-m-1)}(\alpha_i^2-\alpha_{p-i}^2)\right) t^m.
\end{eqnarray}
\end{corollary}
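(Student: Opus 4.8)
The plan is to prove the three formulas in Corollary~\ref{cor:tsr} as direct specializations of the general remainder formula \eqref{eq:res} from Lemma~\ref{lem:remainder}, which I take as given. Throughout I use the convention $\tsC_{-1}=0$ and I will also need the trivial values $\tsC_0(t)=1$ and the fact that $\tsC_m(t)$ has degree $m$ for $m\ge 0$. The three cases correspond to $k=p$, $k=p-1$, and $k=0$, and each reduces to bookkeeping of the terms $\tsC_{p-k-i-1}(t)-\tsC_{i-1}(t)$ under the summation $\sum_{i=0}^{p-k}$.

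First I would establish \eqref{eq:res_p}. Setting $k=p$ in \eqref{eq:res}, the sum $\sum_{i=0}^{0}$ has the single term $i=0$, contributing $\alpha_0\alpha_p(\tsC_{-1}(t)-\tsC_{-1}(t))=0$. Hence $\tsr_p(t)=0$, confirming that $\tsw_p(t)$ is exactly divisible by $\tsC_{W-p}(t)$, as already noted after \eqref{eq:ARdiag}. Next, for \eqref{eq:res_p1} I set $k=p-1$, so the sum runs over $i=0,1$. The $i=0$ term gives $\alpha_0\alpha_{p-1}(\tsC_0(t)-\tsC_{-1}(t))=\alpha_0\alpha_{p-1}$, and the $i=1$ term gives $\alpha_1\alpha_p(\tsC_{-1}(t)-\tsC_0(t))=-\alpha_1\alpha_p$. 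Adding these yields the constant $\alpha_0\alpha_{p-1}-\alpha_1\alpha_p$, with no surviving $t$-dependence.

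The case $k=0$, giving \eqref{eq:res_0}, is the one requiring genuine combinatorial care and is the main obstacle. Here \eqref{eq:res} becomes $\tsr_0(t)=\sum_{i=0}^{p}\alpha_i^2(\tsC_{p-i-1}(t)-\tsC_{i-1}(t))$, where I have used $\alpha_i\alpha_{i+0}=\alpha_i^2$. The plan is to expand each $\tsC_n(t)=\sum_{m=0}^{n}t^m$ and collect the coefficient of $t^m$ for each fixed $m$. The term $\alpha_i^2\tsC_{p-i-1}(t)$ contributes $\alpha_i^2$ to the coefficient of $t^m$ precisely when $m\le p-i-1$, i.e.\ when $i\le p-m-1$; the term $-\alpha_i^2\tsC_{i-1}(t)$ contributes $-\alpha_i^2$ when $m\le i-1$, i.e.\ when $i\ge m+1$. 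Re-indexing the negative contributions via $i\mapsto p-i$ (so $\alpha_i^2$ becomes $\alpha_{p-i}^2$ and the condition $i\ge m+1$ becomes $i\le p-m-1$) aligns both families under the common constraint $0\le i\le p-m-1$, and since also $i\le m$ is forced by compatibility of the two ranges, the coefficient of $t^m$ collapses to $\sum_{i=0}^{\min(m,\,p-m-1)}(\alpha_i^2-\alpha_{p-i}^2)$, with $m$ running from $0$ to $p-1$. I would verify the upper limit $m=p-1$ explicitly (the minimum is then $0$, giving the single term $\alpha_0^2-\alpha_p^2$) and confirm that for $m\ge p$ both families are empty, so no higher powers appear; this justifies that $\tsr_0(t)$ has degree at most $p-1$, consistent with a genuine remainder modulo $\tsC_{W-p}(t)$.

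\begin{proof}
All three identities follow by specializing \eqref{eq:res} and using $\tsC_{-1}=0$, $\tsC_0(t)=1$.

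For $k=p$ the sum in \eqref{eq:res} has only the term $i=0$, equal to $\alpha_0\alpha_p(\tsC_{-1}(t)-\tsC_{-1}(t))=0$, giving \eqref{eq:res_p}.

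For $k=p-1$ the index $i$ ranges over $\{0,1\}$, yielding
\begin{eqnarray*}
  \tsr_{p-1}(t)=\alpha_0\alpha_{p-1}(\tsC_0(t)-\tsC_{-1}(t))+\alpha_1\alpha_p(\tsC_{-1}(t)-\tsC_0(t))
  =\alpha_0\alpha_{p-1}-\alpha_1\alpha_p,
\end{eqnarray*}
which is \eqref{eq:res_p1}.

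For $k=0$ we have $\tsr_0(t)=\sum_{i=0}^{p}\alpha_i^2(\tsC_{p-i-1}(t)-\tsC_{i-1}(t))$. Writing $\tsC_n(t)=\sum_{m=0}^{n}t^m$, the term $\alpha_i^2\tsC_{p-i-1}(t)$ contributes $\alpha_i^2$ to $[t^m]\tsr_0(t)$ iff $i\le p-m-1$, and $-\alpha_i^2\tsC_{i-1}(t)$ contributes $-\alpha_i^2$ iff $i\ge m+1$. Replacing $i$ by $p-i$ in the latter family turns its contribution into $-\alpha_{p-i}^2$ under the condition $i\le p-m-1$. Both families then share the range $0\le i\le p-m-1$, while the first additionally requires $i\le m$; hence
\begin{eqnarray*}
  [t^m]\,\tsr_0(t)=\sum_{i=0}^{\min(m,\,p-m-1)}(\alpha_i^2-\alpha_{p-i}^2),\qquad m=0,\ldots,p-1,
\end{eqnarray*}
and $[t^m]\tsr_0(t)=0$ for $m\ge p$. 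This establishes \eqref{eq:res_0}.
\end{proof}
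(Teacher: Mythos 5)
Your verification of \eqref{eq:res_p} and \eqref{eq:res_p1} is correct and coincides with the paper's own argument (direct substitution into \eqref{eq:res} with $\tsC_{-1}=0$, $\tsC_0(t)=1$). The genuine gap is in the $k=0$ case, at the step ``while the first additionally requires $i\le m$'' (equivalently, your earlier claim that ``$i\le m$ is forced by compatibility of the two ranges''). That constraint is false: the positive family $\alpha_i^2\,\tsC_{p-i-1}(t)$ contributes $\alpha_i^2$ to the coefficient of $t^m$ for \emph{every} $i\le p-m-1$, with no additional bound $i\le m$. For instance, with $p=5$ and $m=1$, the terms $i=2$ and $i=3$ do contribute, since $\tsC_2(t)$ and $\tsC_1(t)$ both contain $t$. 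What your bookkeeping actually yields, after the re-indexing $i\mapsto p-i$ in the negative family, is
\begin{equation*}
  [t^m]\,\tsr_0(t)\;=\;\sum_{i=0}^{p-m-1}\bigl(\alpha_i^2-\alpha_{p-i}^2\bigr),
  \qquad m=0,\ldots,p-1,
\end{equation*}
and passing from the upper limit $p-m-1$ to $\min(m,\,p-m-1)$ requires an argument you never supply: when $m<p-m-1$, the surplus terms with $m+1\le i\le p-m-1$ vanish \emph{in aggregate}, because that index range is invariant under $i\mapsto p-i$ while the summand $\alpha_i^2-\alpha_{p-i}^2$ is antisymmetric under this map (in the example above, $(\alpha_2^2-\alpha_3^2)+(\alpha_3^2-\alpha_2^2)=0$). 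So your final formula is correct, but the justification as written fails exactly at the combinatorial heart of the statement.

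Once that cancellation is made explicit, your coefficient-extraction route becomes a legitimate, slightly more elementary alternative to the paper's proof. The paper avoids the issue structurally: it first applies \eqref{eq:cm} to each difference, obtaining
\begin{equation*}
  \tsC_{p-i-1}(t)-\tsC_{i-1}(t)=
  \begin{cases}
    t^i\,\tsC_{p-2i-1}(t), & i<p/2,\\
    0, & i=p/2,\ p\ \text{even},\\
    -\,t^{p-i}\,\tsC_{2i-p-1}(t), & i>p/2,
  \end{cases}
\end{equation*}
so that the symmetric cancellation between $i$ and $p-i$ is built into the identity \emph{before} coefficients are collected; pairing $j=p-i$ then gives $\tsr_0(t)=\sum_{i=0}^{\lfloor(p-1)/2\rfloor}(\alpha_i^2-\alpha_{p-i}^2)\,t^i\,\tsC_{p-2i-1}(t)$ and \eqref{eq:res_0} follows by regrouping. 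Inserting one sentence into your proof --- ``for $m<p-m-1$ the terms with $m+1\le i\le p-m-1$ cancel pairwise, since the range is symmetric under $i\mapsto p-i$ and the summand changes sign'' --- would close the gap.
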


\begin{proof}
The equality \eqref{eq:res_p} is readily obtained from \eqref{eq:res}, since $\tsC_{-1}=0$.
The equality \eqref{eq:res_p1} is proved by direct substitution of $k=p-1$ and $i=0, 1$ to
\eqref{eq:res}.

For the proof of \eqref{eq:res_0}, let us substitute $k=0$ to the sum \eqref{eq:res}:
\begin{eqnarray}\label{eq:res0a}
  \tsr_0(t)=\sum_{i=0}^{p} \alpha_i^2(\tsC_{p-i-1}(t) - \tsC_{i-1}(t))
\end{eqnarray}
and then write this sum as a polynomial in $t$.
By \eqref{eq:cm} we obtain for $i=0,\ldots,p$
\begin{eqnarray*}
  \tsC_{p-i-1}(t) - \tsC_{i-1}(t) =
  \begin{cases}
  t^i \tsC_{p-2i-1}, \ i<p/2,\\
  0, \ i=p/2,\ p \mbox{\ is even},\\
  -t^{p-i} \tsC_{2i-p-1}, \ i>p/2.
  \end{cases}
\end{eqnarray*}
Therefore,
\begin{eqnarray*}
  \tsr_0(t)=\sum_{i=0}^{\lfloor (p-1)/2\rfloor} \alpha_i^2 t^i  \tsC_{p-2i-1} - \sum_{j=\lceil (p+1)/2\rceil }^{p} \alpha_j^2 t^{p-j} \tsC_{2j-p-1}.
\end{eqnarray*}
Taking $j=p-i$, we obtain
\begin{eqnarray*}
  \tsr_0(t)&=&\sum_{i=0}^{\lfloor (p-1)/2\rfloor } (\alpha_i^2-\alpha_{p-i}^2) t^i \tsC_{p-2i-1} = \sum_{i=0}^{\lfloor (p-1)/2\rfloor } (\alpha_i^2-\alpha_{p-i}^2) \sum_{m=i}^{p-i-1} t^m.
  % = \\
  %&=&\sum_{m=0}^{p-1} \left(\sum_{i:\,i< p/2, i+m \le p - 1)}(\alpha_i^2-\alpha_{p-i}^2)\right) t^m.
\end{eqnarray*}
The equality \eqref{eq:res_0} is derived from this expression by regrouping the terms.
\end{proof}

\begin{remark}
It can be easily checked that the remainders, which have been calculated in the course of  proofs  of Theorems~\ref{th:AR1_3_1} and \ref{th:AR2_5_1}, can be deduced from Corollary~\ref{cor:tsr}.
For AR(1), we have $\tsr_0(t) = 1-\phi_1^2 = \alpha_0^2 - \alpha_1^2$.
For AR(2), we have $\tsr_1(t)=-\phi_1(1+\phi_2) = \alpha_0\alpha_1 - \alpha_1\alpha_2$ and
$\tsr_0(t) = (1-\phi_2^2)(t+1) = (\alpha_0^2-\alpha_2^2)(t+1)$.
\end{remark}

It follows from Corollary~\ref{cor:tsr} that one of the  necessary conditions of existence
of the  decomposition $\bfW=\bfA\ast \bfB$ for the case of a diagonal $\bfB$ with $B\ge p$
is $\phi_p=\pm 1$ (the coefficient in front of  $t^{p-1}$ in \eqref{eq:res_0} is equal to
$\alpha_0^2 - \alpha_p^2 = 1-\phi_p^2$ and should be equal to 0). This contradicts to $|\phi_p|<1$,  a necessary condition for the stationarity of AR($p$).
Therefore, for the matrix $\bfW$ corresponding to a stationary AR($p$) model, the  deconvolution \eqref{eq:b_decomp} cannot be performed.

\section{Conclusion}

We have considered the problem of matrix blind deconvolution; that is, finding matrices  $\bfA$ and $\bfB$ so that for a given matrix
 $\bfC$ we have $\bfC= \bfA \ast \bfB$. We have concentrated on the class of   matrices $\bfC$  proportional to the inverse autocovariance matrices of  autoregressive processes.
 We have shown  that the existence of the deconvolution of these matrices is equivalent to  the possibility of an equivalent  representation of a vector  HSLRA in the form of a matrix HSLRA and, as a consequence, for studying many versions of the so-called `Monte Carlo SSA', where the noise is red, and extensions of this method for more general autoregressive noise.

In the cases of  autoregressive models of orders one and two, we have fully characterized the range of parameters  where such deconvolution  can be performed
and provided construction schemes for performing deconvolutions. We have also considered stationary autoregressive models of order $p$, where we have proved that the
deconvolution $\bfC= \bfA \ast \bfB$ does not exist if the matrix $\bfB$ is diagonal and its size is larger than $p$.

We obtain an interesting and rather surprising theoretical fact: exact deconvolution of inverse autocovariance matrices corresponding to a stationary autoregressive noise can never be performed exactly.
There are several particular cases when such deconvolution is possible but in all these cases  the stationary conditions are violated.
Moreover, in the  most interesting special cases, where the deconvolution is possible, the values of coefficients of the autoregressive models lie on  the boundary of the
stationarity conditions.

Besides the theory, there are important implications of our results for applications of the HSLRA, SSA and related subspace-based methods for time series analysis. In particular, the results of this paper  explain why, in the case of stationary autoregressive noise, the problem of finding the optimal weights in the matrix form of the HSLRA can be solved only approximately.

There are many extensions of the HSLRA and SSA from time series to systems of time series, digital images and system identification problems; see  \cite{Markovsky2019,Markovsky.etal2006} for  HSLRA and \cite[Chapt.~4,5]{golyandina2018singular} for SSA. Hence the present  paper  can also be used as the basis for constructing  similar theoretical foundations for a  wider range of the problems.

%\bibliography{rssaw}

\begin{thebibliography}{10}

\bibitem{Allen2014}
G.~I. Allen, L.~Grosenick, and J.~Taylor.
\newblock A generalized least-square matrix decomposition.
\newblock {\em Journal of the American Statistical Association},
  109(505):145--159, 2014.

\bibitem{Allen.Smith1996}
M.R. Allen and L.A. Smith.
\newblock {Monte Carlo SSA}: Detecting irregular oscillations in the presence
  of colored noise.
\newblock {\em J. Clim.}, 9(12):3373--3404, 1996.

\bibitem{Allen.Robertson1996}
R.~M. Allen and W.~A. Robertson.
\newblock Distinguishing modulated oscillations from coloured noise in
  multivariate datasets.
\newblock {\em Clim Dynam}, 12(11):775--784, 1996.

\bibitem{Cadzow1988}
J.~A. Cadzow.
\newblock Signal enhancement: a composite property mapping algorithm.
\newblock {\em IEEE Trans. Acoust.}, 36(1):49--62, 1988.

\bibitem{Chu.etal2003}
M.~T. Chu, R.~E. Funderlic, and R.~J. Plemmons.
\newblock Structured low rank approximation.
\newblock {\em Linear Algebra and its Applications}, 366:157--172, 2003.

\bibitem{gillard2013optimization}
J.~Gillard and A.~Zhigljavsky.
\newblock Optimization challenges in the structured low rank approximation
  problem.
\newblock {\em Journal of Global Optimization}, 57(3):733--751, 2013.

\bibitem{gillard2015stochastic}
J.~Gillard and A.~Zhigljavsky.
\newblock Stochastic algorithms for solving structured low-rank matrix
  approximation problems.
\newblock {\em Communications in Nonlinear Science and Numerical Simulation},
  21(1-3):70--88, 2015.

\bibitem{golyandina2018singular}
N.~Golyandina, A.~Korobeynikov, and A.~Zhigljavsky.
\newblock {\em Singular spectrum analysis with R}.
\newblock Springer, 2018.

\bibitem{Golyandina}
Nina Golyandina.
\newblock Particularities and commonalities of singular spectrum analysis as a
  method of time series analysis and signal processing.
\newblock {\em WIREs Computational Statistics}, n/a(n/a):e1487, 2020.

\bibitem{Jemwa.Aldrich2006}
Gorden~T. Jemwa and Chris Aldrich.
\newblock Classification of process dynamics with {Monte Carlo} singular
  spectrum analysis.
\newblock {\em Computers \& Chemical Engineering}, 30(5):816--831, 2006.

\bibitem{Jevrejeva.Moore2001}
Svetlana Jevrejeva and John~C Moore.
\newblock Singular spectrum analysis of {Baltic Sea} ice conditions and
  large-scale atmospheric patterns since 1708.
\newblock {\em Geophysical Research Letters}, 28(23):4503--4506, 2001.

\bibitem{Jury1964}
E.~I. Jury.
\newblock {\em {Theory and application of the {Z}-transform method}}.
\newblock Wiley, New York, NY, 1964.

\bibitem{Kundur.Hatzinakos1996}
D.~{Kundur} and D.~{Hatzinakos}.
\newblock Blind image deconvolution.
\newblock {\em IEEE Signal Processing Magazine}, 13(3):43--64, May 1996.

\bibitem{Markovsky2019}
I.~Markovsky.
\newblock {\em Low Rank Approximation: Algorithms, Implementation,
  Applications}.
\newblock Springer, 2nd edition, 2019.

\bibitem{Markovsky.Usevich2014}
I.~Markovsky and K.~Usevich.
\newblock Software for weighted structured low-rank approximation.
\newblock {\em Journal of Computational and Applied Mathematics}, 256:278 --
  292, 2014.

\bibitem{Markovsky.etal2006}
I.~Markovsky, J.~Willems, S.~Van~Huffel, and B.~De~Moor.
\newblock {\em Exact and Approximate Modeling of Linear Systems}.
\newblock Society for Industrial and Applied Mathematics, 2006.

\bibitem{Michailovich.Tannenbaum2007}
O.~{Michailovich} and A.~{Tannenbaum}.
\newblock Blind deconvolution of medical ultrasound images: A parametric
  inverse filtering approach.
\newblock {\em IEEE Transactions on Image Processing}, 16(12):3005--3019, Dec
  2007.

\bibitem{PALUS.Novotna2004}
Milan Palus and D.~Novotn{\'a}.
\newblock {Enhanced Monte Carlo Singular System Analysis and detection of
  period 7.8 years oscillatory modes in the monthly NAO index and temperature
  records}.
\newblock {\em Nonlinear Processes in Geophysics}, 11(5/6):721--729, December
  2004.

\bibitem{Palus.Novotna1998}
Milan Palus and Dagmar Novotn{\'a}.
\newblock Detecting modes with nontrivial dynamics embedded in colored noise:
  enhanced {Monte Carlo SSA} and the case of climate oscillations.
\newblock {\em Physics Letters A}, 248(2):191--202, 1998.

\bibitem{Shaman1975}
P.~Shaman.
\newblock An approximate inverse for the covariance matrix of moving average
  and autoregressive processes.
\newblock {\em Ann. Statist.}, 3(2):532--538, 03 1975.

\bibitem{Siddiqui1958}
M.~M. Siddiqui.
\newblock On the inversion of the sample covariance matrix in a stationary
  autoregressive process.
\newblock {\em Ann. Math. Statist.}, 29(2):585--588, 06 1958.

\bibitem{Xu.Yue2015}
Chang Xu and Dongjie Yue.
\newblock {Monte Carlo SSA} to detect time-variable seasonal oscillations from
  {GPS}-derived site position time series.
\newblock {\em Tectonophysics}, 665:118--126, 2015.

\bibitem{Zhigljavsky.etal2016}
A.~Zhigljavsky, N.~Golyandina, and S.~Gryaznov.
\newblock Deconvolution of a discrete uniform distribution.
\newblock {\em Stat Probabil Lett}, 118:37--44, 2016.

\bibitem{Zvonarev.Golyandina2017}
N.~Zvonarev and N.~Golyandina.
\newblock Iterative algorithms for weighted and unweighted finite-rank
  time-series approximations.
\newblock {\em Stat Interface}, 10(1):5--18, 2017.

\bibitem{Zvonarev.Golyandina2018}
N.~Zvonarev and N.~Golyandina.
\newblock Modified {Gauss}-{Newthon} method in low-rank signal estimation.
\newblock {\em arXiv:1803.01419}, 2018.

\end{thebibliography}

\end{document}